\newtheorem{theorem}{Theorem}[section]
\newtheorem{lemma}[theorem]{Lemma}
\newtheorem{corollary}[theorem]{Corollary}
\newtheorem*{main-theorem}{Main Theorem}
\newtheorem{assumption}[theorem]{Assumption}
\newtheorem*{remark*}{Remark}
\numberwithin{equation}{section}
\renewcommand{\geq}{\geqslant}
\renewcommand{\leq}{\leqslant}
\renewcommand{\L}{\mathcal{L}}
\newcommand{\M}{\mathcal{M}}
\renewcommand{\l}{\langle}
\renewcommand{\r}{\rangle}
\newcommand{\e}{\boldsymbol{e}}
\renewcommand{\u}{\boldsymbol{u}}
\renewcommand{\v}{\boldsymbol{v}}
\renewcommand{\P}{\boldsymbol{\phi}}
\begin{document}

\title[Modulational instabillity]{Modulational instability in nonlinear nonlocal equations of regularized long wave type}

\author[Hur]{Vera~Mikyoung~Hur}
\address{Department of Mathematics, University of Illinois at Urbana-Champaign, Urbana, Illinois 61801}
\email{verahur@math.uiuc.edu}

\author[Pandey]{Ashish~Kumar~Pandey}
\email{akpande2@illinois.edu}

\date{\today}

\begin{abstract}
We study the stability and instability of periodic traveling waves in the vicinity of the origin in the spectral plane,
for equations of Benjamin-Bona-Mahony (BBM) and regularized Boussinesq types permitting nonlocal dispersion.
We extend recent results for equations of Korteweg-de Vries type  
and derive modulational instability indices as functions of the wave number of the underlying wave. 
We show that a sufficiently small, periodic traveling wave of the BBM equation 
is spectrally unstable to long wavelength perturbations if the wave number is greater than a critical value
and a sufficiently small, periodic traveling wave of the regularized Boussinesq equation 
is stable to square integrable perturbations.
\end{abstract}

\maketitle

%%%%%%%%%%%%%%%%%%%%%%%%%%%%%%%%%%%%%%%%%%%%%%%%%%
%%%%%%%%%%%%%%%%%%%%%%%%%%%%%%%%%%%%%%%%%%%%%%%%%%
%%%%%%%%%%%%%%%%%%%%%%%%%%%%%%%%%%%%%%%%%%%%%%%%%%
\section{Introduction}\label{sec:intro}
%%%%%%%%%%%%%%%%%%%%%%%%%%%%%%%%%%%%%%%%%%%%%%%%%%
%%%%%%%%%%%%%%%%%%%%%%%%%%%%%%%%%%%%%%%%%%%%%%%%%%
%%%%%%%%%%%%%%%%%%%%%%%%%%%%%%%%%%%%%%%%%%%%%%%%%%
We study the stability and instability of periodic traveling waves for some classes of nonlinear dispersive equations, 
in particular, equations of Benjamin-Bona-Mahony (BBM) type
\begin{equation}\label{E:bbm}
u_t+\mathcal{M}(u+u^2)_x=0
\end{equation} 
and regularized Boussinesq type
\begin{equation}\label{E:bnesq}
u_{tt}-\mathcal{M}^2(u+u^2)_{xx}=0.
\end{equation}
Here, $t\in\mathbb{R}$ is typically proportional to elapsed time 
and $x\in\mathbb{R}$ is the spatial variable in the primary direction of wave propagation;
$u=u(x,t)$ is real valued, representing the wave profile or a velocity.
Throughout we express partial differentiation either by a subscript or using the symbol $\partial$.
Moreover $\mathcal{M}$ is a Fourier multiplier, defined via its symbol as
\[
\widehat{\mathcal{M} f}(k)=m(k) \widehat{f}(k)
\]
and characterizing dispersion in the linear limit. Note that
\begin{equation}\label{def:speeds}
m(k)=\text{the phase speed}\quad\text{and}\quad (km(k))'=\text{the group speed}.
\end{equation}
Throughout the prime means ordinary differentiation.

%%%%%%%%%%%%%%%%%%%%%%%%%%%%%%%%%%%%%%%%%%%%%%%%%%
%%%%%%%%%%%%%%%%%%%%%%%%%%%%%%%%%%%%%%%%%%%%%%%%%%
%%%%%%%%%%%%%%%%%%%%%%%%%%%%%%%%%%%%%%%%%%%%%%%%%%
\begin{assumption}\label{A:m}\rm
We assume that
\begin{itemize}
\item[(M1)] $m$ is real valued and twice continuously differentiable,
\item[(M2)] $m$ is even and, without loss of generality, $m(0)=1$,
\item[(M3)] $C_1|k|^{\alpha}<m(k)<C_2|k|^{\alpha}$ for $|k|\gg1$ for some $\alpha\geq-1$ and $C_1,C_2>0$,
\item[(M4)] $m(k)\neq m(nk)$ for all $k>0$ and $n=2,3,\dots$.
\end{itemize}
\end{assumption}
%%%%%%%%%%%%%%%%%%%%%%%%%%%%%%%%%%%%%%%%%%%%%%%%%%
%%%%%%%%%%%%%%%%%%%%%%%%%%%%%%%%%%%%%%%%%%%%%%%%%%
%%%%%%%%%%%%%%%%%%%%%%%%%%%%%%%%%%%%%%%%%%%%%%%%%%

Assumption (M1) ensures that the spectra of the associated linearized operators 
depend in the $C^1$ manner on the (long wavelength) perturbation parameter;
here we are not interested in achieving a minimal regularity requirement. 
Assumption (M2) is to break that \eqref{E:bbm}, or \eqref{E:bnesq}, is invariant under spatial translations.
Assumption (M3) ensures that periodic traveling waves of \eqref{E:bbm} or \eqref{E:bnesq} are smooth, among others.
Assumption (M4) rules out the resonance between the fundamental mode and a higher harmonic.

The present treatment works {\em mutatis mutandis} for a broad class of nonlinearities. 
Here we assume for simplicity the quadratic power-law nonlinearity.
Incidentally it is characteristic of numerous wave phenomena.

\

In the case of $\mathcal{M}=(1-\partial_x^2)^{-1}$, note that 
\eqref{E:bbm} reduces to the BBM equation
\begin{equation}\label{E:bbm1}
u_t-u_{xxt}+u_x+(u^2)_x=0,
\end{equation}
which was proposed in \cite{BBM}, as an alternative to the Korteweg-de Vries (KdV) equation
\begin{equation}\label{E:kdv1}
u_t+u_x+u_{xxx}+(u^2)_x=0,
\end{equation}
to model long waves of small but finite amplitude in a channel of water.
%In particular, the existence and stability theory for \eqref{E:bbm1} in the infinite interval is straightforward.
In the case of $\mathcal{M}^2=(1-\partial_x^2)^{-1}$, moreover, 
\eqref{E:bnesq} reduces to the regularized Boussinesq equation
\begin{equation}\label{E:bnesq1}
u_{tt}=u_{xxtt}+u_{xx}+(u^2)_{xx}.
\end{equation}
It does not explicitly appear in the work of Boussinesq.  
But (280) in \cite{Bnesq1877}, for instance, after several ``higher order terms" drop out,
becomes equivalent to what Whitham derived in \cite[Section~13.11]{Whitham}. 
Under the assumption that $u_t+u_x$ is small (which implies right running waves),
one may, in turn, derive \eqref{E:bnesq1}, or the singular Boussinesq equation
\begin{equation}\label{E:bnesq1'}
u_{tt}=u_{xxxx}+u_{xx}+(u^2)_{xx}.\tag{\ref{E:bnesq1}'}
\end{equation}
Moreover \eqref{E:bnesq1} finds relevance in other physical situations such as nonlinear waves in lattices; 
see \cite{Rosenau}, for instance. 
The phase speed of a plane wave solution with the wave number $k$ of the linear part of \eqref{E:bnesq1} is 
(see \eqref{def:speeds})
\[
\sqrt{\frac{1}{1+k^2}}=1-\frac12k^2+O(k^4)\qquad\text{for}\quad k\ll 1,
\]
and it agrees up to the second order with the phase speed $\sqrt{1-k^2}$ for \eqref{E:bnesq1'} 
when $k$ is small. Hence \eqref{E:bnesq1} and \eqref{E:bnesq1'} are equivalent for long waves. 
But \eqref{E:bnesq1} is preferable over \eqref{E:bnesq1'} for short and intermediately long waves.
As a matter of fact, the initial value problem associated with the linear part of \eqref{E:bnesq1'} is ill-posed, 
because a plane wave solution with $k>1$ grows unboundedly, 
whereas arbitrary initial data lead to short time existence for \eqref{E:bnesq1}. 
%Nevertheless \eqref{E:bnesq1'} well explains solitary waves and it invites inverse scattering formalism.
Note that \eqref{E:bnesq} factorizes into two sets of \eqref{E:bbm} --- one moving to the left and the other to the right.

Related to \eqref{E:bbm} and \eqref{E:bnesq} are equations of KdV type 
\begin{equation}\label{E:kdv}
u_t=(\mathcal{M}u+u^2)_x.
\end{equation} 
Note that \eqref{E:bbm}, \eqref{E:bnesq} and \eqref{E:kdv} share the dispersion relation in common, 
but their nonlinearities are different. They are {\em nonlocal} 
unless $m$, or $m^{-1}$ in the case of \eqref{E:bbm} and \eqref{E:bnesq}, is a polynomial in $ik$. 
Examples include the Benjamin-Ono equation, for which $m(k)=|k|$ in \eqref{E:kdv},
and the intermediate long wave equation, for which $m(k)=k\coth k$ in \eqref{E:kdv}. 
Another example, which Whitham proposed in \cite{Whitham} to argue for wave breaking in shallow water, 
corresponds to $m(k)=\sqrt{\tanh k/k}$ in \eqref{E:kdv}; see \cite{Hur-breaking}, for instance, for details.

\

By a traveling wave of \eqref{E:bbm}, \eqref{E:bnesq} or \eqref{E:kdv}, we mean a solution 
which progresses at a constant speed without change of form. 
For a broad class of dispersion symbols, periodic traveling waves with small amplitude
may be attained from a perturbative argument, for instance, a Lyapunov-Schmidt reduction; 
see Appendix~\ref{sec:existence} for details. 
%(Existence in-the-large relies on a global analysis argument; see \cite{HJ1}, for instance.)
We are interested in their stability and instability in the vicinity of the origin in the spectral plane.
Physically, it amounts to long wavelength perturbations or slow modulations of the underlying wave.
%It is a matter of experience that nonlinear waves in dispersive media may appear periodic in small space and time scales but their characteristics --- amplitude, phase, wave number, etc. --- slowly vary in large scales. 
%Periodic traveling waves in water was suggested by Stokes in 1847 and their existence was proved in the 1920s; it came to a surprise when Benjamin and Feir in the 1960s experimentally confirmed their instability.

Whitham in \cite{Whitham1965, Whitham1967} (see also \cite{Whitham}) developed a formal asymptotic approach 
to study the effects of slow modulations in nonlinear dispersive waves. 
Since then, there has been considerable interest in the mathematical community 
in rigorously justifying predictions from Whitham's modulation theory. 
Recently in \cite{BHV, J2013, HJ2, HJ3} (see also \cite{BHJ}), in particular, 
long wavelength perturbations were carried out analytically
for \eqref{E:kdv} and for a class of Hamiltonian systems permitting nonlocal dispersion,
for which Evans function techniques and other ODE methods may not be applicable.
Specifically, modulational instability indices were derived
either with the help of variational structure (see \cite{BHV}) 
or using asymptotic expansions of the solution (see \cite{J2013, HJ2, HJ3}). 

%%%%%%%%%%%%%%%%%%%%%%%%%%%%%%%%%%%%%%%%%%%%%%%%%%
%%%%%%%%%%%%%%%%%%%%%%%%%%%%%%%%%%%%%%%%%%%%%%%%%%
%%%%%%%%%%%%%%%%%%%%%%%%%%%%%%%%%%%%%%%%%%%%%%%%%%
\begin{theorem}[\cite{HJ2,HJ3}]\label{thm:kdv} 
Under Assumption~\ref{A:m}, a $2\pi/k$-periodic traveling wave of \eqref{E:kdv} with sufficiently small amplitude
is spectrally unstable with respect to long wavelength perturbations if 
\begin{equation} \label{def:ind-kdv}
\text{ind}_{\text{KdV}}(k):=\frac{{i_1(k)} i_2^-(k)i_{\text{KdV}}(k)}{i_3^-(k)}<0,
\end{equation}
where
\begin{align}
i_1(k)=&(km(k))'', \notag \\
i_2^-(k)=&(km(k))'-1, \label{def:i123}\\
i_3^-(k)=&m(k)-m(2k) \notag 
\intertext{and}
i_{\text{KdV}}(k)=&2i_3^-(k)+i_2^-(k).\label{def:i-kdv}
\end{align}
Otherwise, it is stable to square integrable perturbations in the vicinity of the origin in the spectral plane.
\end{theorem}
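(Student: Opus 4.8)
The plan is to combine the small-amplitude construction of the underlying wave with a spectral perturbation analysis near the origin, organized by a Bloch--Floquet decomposition. First I would invoke the Lyapunov--Schmidt reduction of Appendix~\ref{sec:existence} to produce a one-parameter family of $2\pi/k$-periodic traveling waves $\phi=\phi_a$ with wave speed $c=c_a$, each admitting a Stokes expansion $\phi_a(x)=a\cos(kx)+a^2\phi_2(x)+\cdots$ and $c_a=-m(k)+a^2c_2+\cdots$. A short computation at order $a^2$ shows that the $\cos(2kx)$ Fourier coefficient of $\phi_2$ equals $\tfrac{1}{2}(m(k)-m(2k))^{-1}=\tfrac{1}{2}\,i_3^-(k)^{-1}$, which is finite precisely because of Assumption~(M4). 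Substituting $u=\phi_a+v$ and passing to the traveling frame linearizes \eqref{E:kdv} to the Hamiltonian spectral problem $\lambda v=\partial_x\mathcal{L}v$, where $\mathcal{L}=\mathcal{M}+2\phi_a+c_a$ is self-adjoint; since $\partial_x$ is skew-adjoint, the problem is Hamiltonian and its $L^2(\mathbb{R})$ spectrum is symmetric with respect to both the real and imaginary axes, so spectral instability is equivalent to the presence of spectrum off the imaginary axis.

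Next I would decompose the perturbation as $v(x)=e^{i\xi x}w(x)$ with $w$ of period $2\pi/k$ and Floquet exponent $\xi\in(-k/2,k/2]$, reducing the problem to the family of operators $L_\xi=(\partial_x+i\xi)(\mathcal{M}_\xi+2\phi_a+c_a)$ on periodic functions, where $\mathcal{M}_\xi e^{inkx}=m(\xi+nk)e^{inkx}$; long wavelength perturbations correspond to $0<|\xi|\ll1$. At the unperturbed point $a=0$, $\xi=0$ the operator has the explicit eigenvalues $i(\xi+nk)(m(\xi+nk)-m(k))$ on the modes $e^{inkx}$, and the choice $c_0=-m(k)$ together with the evenness of $m$ forces a zero eigenvalue of multiplicity three, carried by $n\in\{-1,0,1\}$. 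Modulational (in)stability is governed entirely by how this triple eigenvalue splits as the two small parameters $a$ and $\xi$ are turned on.

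The heart of the argument is then a two-parameter spectral perturbation. I would construct a basis adapted to the three colliding modes, project $L_\xi$ onto the associated approximate three-dimensional invariant subspace, and compute the resulting $3\times3$ matrix $\mathsf{B}(a,\xi)$ to the leading orders in $a$ and $\xi$ needed to resolve the splitting. The diagonal and near-diagonal entries encode the group velocity $(km(k))'$ through $i_1(k)=(km(k))''$ and $i_2^-(k)=(km(k))'-1$, while the coupling between the fundamental and the mean mode is mediated by the second-harmonic correction $\phi_2$ and hence carries the factor $i_3^-(k)^{-1}$; assembling these produces the combination $i_{\text{KdV}}(k)=2i_3^-(k)+i_2^-(k)$. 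Finally I would examine the characteristic polynomial of $\mathsf{B}(a,\xi)$: by the Hamiltonian symmetry its eigenvalues remain purely imaginary unless a discriminant changes sign, and a direct computation shows that this discriminant equals, up to a positive multiple of $a^2\xi^2$ and higher order corrections, the quantity $i_1(k)i_2^-(k)i_{\text{KdV}}(k)/i_3^-(k)=\text{ind}_{\text{KdV}}(k)$. Negativity of this index yields a quartet of eigenvalues off the imaginary axis, hence spectral instability, whereas positivity keeps the eigenvalues imaginary and gives spectral stability in a neighborhood of the origin.

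The main obstacle I anticipate is the perturbation bookkeeping in the third step: one must carry the expansions of both $\phi_a$ and $L_\xi$ to exactly the order at which the real parts of the perturbed eigenvalues first appear, track the non-self-adjoint corrections introduced by the Bloch conjugation, and verify that the off-diagonal couplings combine into the clean factorized form of $\text{ind}_{\text{KdV}}$ rather than an unfactorable expression. In particular, correctly propagating the resonant denominator $i_3^-(k)^{-1}$ from the $O(a^2)$ profile correction into the discriminant, and confirming that the \emph{sign} of the discriminant --- not merely its vanishing --- is captured by the product $i_1 i_2^- i_{\text{KdV}}/i_3^-$, is the delicate point on which the whole result turns.
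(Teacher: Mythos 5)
Your proposal follows essentially the same route as the paper's (the arguments of HJ2/HJ3, reproduced for BBM in Section~2): Lyapunov--Schmidt construction of the small-amplitude wave, Bloch--Floquet reduction to $\mathcal{L}_\xi$, projection onto the three-dimensional generalized kernel spanned by the modes $n\in\{-1,0,1\}$, and a sign analysis of the discriminant of the resulting real cubic, with your key computations (the bifurcation speed $c_0=-m(k)$ and the second-harmonic coefficient $\tfrac12 i_3^-(k)^{-1}$) matching the paper's. The only bookkeeping caveat is that the discriminant is not a positive multiple of $a^2\xi^2$ times the index: after normalizing by $\xi$, it has the form $\text{disc}(k;\xi,0)+\text{ind}_{\text{KdV}}(k)\,a^2+O(a^2(a^2+\xi^2))$ with $\text{disc}(k;\xi,0)>0$, so that when the index is negative one concludes instability by taking $\xi$ sufficiently small \emph{depending on} the fixed small amplitude $a$ --- precisely the delicate point you flag at the end.
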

%%%%%%%%%%%%%%%%%%%%%%%%%%%%%%%%%%%%%%%%%%%%%%%%%%
%%%%%%%%%%%%%%%%%%%%%%%%%%%%%%%%%%%%%%%%%%%%%%%%%%
%%%%%%%%%%%%%%%%%%%%%%%%%%%%%%%%%%%%%%%%%%%%%%%%%%

Here we take matters further and derive modulational instability indices for \eqref{E:bbm} and \eqref{E:bnesq}. 

%%%%%%%%%%%%%%%%%%%%%%%%%%%%%%%%%%%%%%%%%%%%%%%%%%
%%%%%%%%%%%%%%%%%%%%%%%%%%%%%%%%%%%%%%%%%%%%%%%%%%
%%%%%%%%%%%%%%%%%%%%%%%%%%%%%%%%%%%%%%%%%%%%%%%%%%
\begin{theorem}[Modulational instability index for \eqref{E:bbm}]\label{thm:bbm} 
Under Assumption~\ref{A:m}, a sufficiently small, $2\pi/k$-periodic traveling wave of \eqref{E:bbm} 
is spectrally unstable to long wavelength perturbations if 
\begin{equation} \label{def:ind-bbm}
\text{ind}_{\text{BBM}}(k):=\frac{{i_1(k)} i_2^-(k)i_{\text{BBM}}(k)}{i_3^-(k)}<0,
\end{equation}
where $i_1, i_2^-, i_3^-$ are in \eqref{def:i123} and 
\begin{equation}\label{def:i-bbm}
i_{\text{BBM}}(k)=2i_3^-(k)+m(2k)i_2^-(k).
\end{equation}
Otherwise, it is stable to square integrable perturbations in the vicinity of the origin in the spectral plane.
\end{theorem}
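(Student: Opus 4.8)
The plan is to follow, step for step, the spectral perturbation analysis that underlies Theorem~\ref{thm:kdv}, while tracking how the different placement of the multiplier $\M$ in \eqref{E:bbm} --- wrapping the entire nonlinearity rather than sitting beside it as in \eqref{E:kdv} --- alters each ingredient. A $2\pi/k$-periodic traveling wave $u(x,t)=\phi(x-ct)$ of \eqref{E:bbm} satisfies, after one integration in the moving frame $z=x-ct$, the profile equation $\M(\phi+\phi^2)=c\phi+B$ for a constant $B$. I would construct $\phi=\phi_a$ and $c=c_a$ as power series in a small amplitude parameter $a$ via the Lyapunov--Schmidt reduction of Appendix~\ref{sec:existence}. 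At leading order one finds $\phi_1(z)=\cos(kz)$ and $c_0=m(k)$, exactly as for \eqref{E:kdv}. The first genuine difference appears at $O(a^2)$: since $\phi_1^2=\tfrac12+\tfrac12\cos(2kz)$ is now acted on by $\M$, the second-harmonic part of the correction $\phi_2$ comes out proportional to $m(2k)/(m(k)-m(2k))=m(2k)/i_3^-(k)$, carrying an extra factor $m(2k)$ relative to the KdV profile. I expect this single weighting to be the ultimate source of the discrepancy between $i_{\text{BBM}}$ and $i_{\text{KdV}}$.

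Next I would linearize \eqref{E:bbm} about $\phi_a$ in the moving frame, obtaining the operator $c\partial_z-\M\partial_z(1+2\phi_a)$ with $(1+2\phi_a)$ acting by multiplication, and impose a Bloch--Floquet ansatz. Writing a perturbation as $e^{\lambda t}e^{i\xi z}W(z)$ with $W$ being $2\pi/k$-periodic and $\xi$ a real Floquet exponent, one is led to $\lambda W=\L_{a,\xi}W$, where $\L_{a,\xi}$ is obtained by conjugating the above with $e^{i\xi z}$, replacing $\partial_z$ by $\partial_z+i\xi$ and shifting the symbol of $\M$ by $\xi$. The structural distinction from \eqref{E:kdv} is that here $\M$ is the outermost operator on the nonlinear term, so $\L_{a,\xi}$ is $\partial_z$ composed with a \emph{non}--self-adjoint operator; equivalently, after multiplying through, it is a Hamiltonian spectral problem carrying the weight $\M^{-1}$ on the eigenvalue $\lambda$. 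Modulational perturbations correspond to $(\lambda,\xi,a)$ near the origin, and at $(\xi,a)=(0,0)$ the operator $(c_0-\M)\partial_z$ has $\lambda=0$ as a semisimple eigenvalue whose eigenspace, thanks to Assumption~(M4), is exactly the three-dimensional span of $1,\,e^{ikz},\,e^{-ikz}$.

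I would then run Kato--Rellich perturbation theory on the total eigenprojection for this cluster, expanding the associated $3\times3$ reduced matrix $B(a,\xi)$ jointly in $a$ and $\xi$ to first nontrivial order. Because of the Hamiltonian symmetry the three small eigenvalues organize as $\{0,\mu(\xi),-\mu(\xi)\}$, with $\mu(\xi)^2$ real at leading order, and modulational instability is equivalent to $\mu^2>0$ for $0<|\xi|\ll1$, the opposite sign forcing purely imaginary spectrum and hence the ``Otherwise'' stability clause. Collecting terms, the factors $i_1=(km)''$, $i_2^-=(km)'-1$ and $i_3^-=m(k)-m(2k)$ enter precisely as in Theorem~\ref{thm:kdv}, since they depend only on the shared dispersion relation and the shared second-harmonic denominator: $i_1$ through the curvature of $km(k)$ governing whether $\pm\mu$ can collide, $i_2^-$ through the offset between the group velocity at $k$ and the long-wave speed, and $i_3^-$ through the $\phi_2$ denominator. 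The effective nonlinearity coefficient then reads $i_{\text{BBM}}=2i_3^-+m(2k)\,i_2^-$ in place of $i_{\text{KdV}}=2i_3^-+i_2^-$, the extra $m(2k)$ inherited from the second-harmonic amplitude of $\phi_2$; the sign of $\text{ind}_{\text{BBM}}(k)$ detects the sign of $\mu^2$, with $\text{ind}_{\text{BBM}}(k)<0$ corresponding to instability.

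The main obstacle I anticipate is the bookkeeping in $B(a,\xi)$ forced by the loss of self-adjointness. Unlike for \eqref{E:kdv}, the linearized operator is not $\partial_z$ composed with a self-adjoint operator, so the pairings defining the entries of $B(a,\xi)$ must be taken against the correct left (adjoint) eigenfunctions, equivalently in the $\M^{-1}$-weighted inner product, and this weight injects further factors $m(nk)$ into the intermediate quantities. The delicate point is to verify that, after all cancellations, the weight $m(2k)$ attaches to the $i_2^-$ term \emph{alone}, leaving the $2i_3^-$ term and the prefactors $i_1,i_2^-,i_3^-$ identical to the KdV case; this is exactly what produces \eqref{def:i-bbm} and hence \eqref{def:ind-bbm}. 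Once this is confirmed, the sign of $\text{ind}_{\text{BBM}}(k)$ decides spectral instability versus stability with respect to long wavelength perturbations, completing the proof.
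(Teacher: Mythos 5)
Your overall strategy is the paper's: Lyapunov--Schmidt construction of the wave, Bloch--Floquet reduction to $\mathcal{L}_{\xi,a}$, and perturbation of the three-dimensional spectral cluster at the origin via a $3\times3$ reduced matrix pair (the paper handles the loss of orthogonality not with left eigenfunctions or an $\mathcal{M}^{-1}$-weighted pairing but simply by carrying the Gram matrix $\mathbf{I}_a$ alongside $\mathbf{B}_{\xi,a}$ in \eqref{def:BI}, which is equivalent bookkeeping). The genuine gap is your claimed spectral structure and the resulting instability criterion. The three small eigenvalues of $\mathcal{L}_{\xi,a}$ do \emph{not} organize as $\{0,\mu(\xi),-\mu(\xi)\}$, and instability cannot be read off from the sign of a single quantity $\mu^2$. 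For fixed $\xi\neq 0$ the operator $\mathcal{L}_{\xi,a}$ retains only the symmetry $\lambda\mapsto-\bar\lambda$; the maps $\lambda\mapsto\bar\lambda$ and $\lambda\mapsto-\lambda$ each send $\text{spec}(\mathcal{L}_\xi)$ to $\text{spec}(\mathcal{L}_{-\xi})$, not to itself, so no eigenvalue is pinned at the origin and the cluster need not be symmetric about it. Concretely, already at $a=0$ the three relevant eigenvalues are $i\omega_{0,\xi},\,i\omega_{\pm1,\xi}$ with $\omega_{n,\xi}$ as in \eqref{def:w-bbm}: to leading order $\omega_{0,\xi}\approx\xi(m(k)-1)$ while $\omega_{1,\xi}\approx\omega_{-1,\xi}\approx-\xi km'(k)$. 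None of these vanishes generically, and the two side-band eigenvalues nearly \emph{coincide} rather than being opposite; the bifurcation that decides stability is the splitting, as $a$ turns on, of this near-double eigenvalue centered at $-i\xi km'(k)\neq 0$, coupled to the mean mode $\phi_3$ through the $O(a)$ and $O(\xi a)$ entries of $\mathbf{B}_{\xi,a}$ in \eqref{E:B-bbm}. A criterion ``$\mu^2>0$'' for a pair centered at the origin does not describe this configuration, so the step in which you convert the reduced matrix into a stability verdict would fail as written.

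The fix is the criterion the paper actually uses. After the rescaling $\lambda\mapsto-i\xi\lambda$, the characteristic polynomial $\det(\mathbf{B}_{\xi,a}-(-i\xi)\lambda\mathbf{I}_a)$ equals $i\xi^3$ times a cubic with \emph{real} coefficients $d_j(\xi,a)$; by the $\lambda\mapsto-\bar\lambda$ symmetry, spectral stability near the origin corresponds to three real roots and instability to one real root plus a complex-conjugate pair, i.e.\ to the cubic discriminant $\text{disc}_{\text{BBM}}(\xi,a)$ being negative. One then expands $\text{disc}_{\text{BBM}}(\xi,a)=\text{disc}_{\text{BBM}}(k;\xi,0)+\text{ind}_{\text{BBM}}(k)\,a^2+O(a^2(a^2+\xi^2))$, verifies $\text{disc}_{\text{BBM}}(k;\xi,0)>0$ (this is where the $a=0$ picture matters: the double root splits along the imaginary axis at order $\xi^2$ through $i_1$, and sits at distance $i_2^-$ from the third root, giving the paper's explicit positive expression), and concludes that the sign of $\text{ind}_{\text{BBM}}(k)$ --- identified by a symbolic computation with the sign of \eqref{def:ind-bbm} --- decides instability versus stability. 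Your diagnosis of the source of the factor $m(2k)$ is correct: the second-harmonic amplitude in \eqref{E:u(k,a,b)} is $\tfrac12a^2\,m(2k)/(m(k)-m(2k))$, versus $\tfrac12a^2/(m(k)-m(2k))$ for \eqref{E:kdv}, and this is what turns $i_{\text{KdV}}=2i_3^-+i_2^-$ into $i_{\text{BBM}}=2i_3^-+m(2k)i_2^-$; but confirming that $m(2k)$ attaches to $i_2^-$ alone can only come out of the discriminant computation above, not out of the $\{0,\pm\mu\}$ shortcut.
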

%%%%%%%%%%%%%%%%%%%%%%%%%%%%%%%%%%%%%%%%%%%%%%%%%%
%%%%%%%%%%%%%%%%%%%%%%%%%%%%%%%%%%%%%%%%%%%%%%%%%%
%%%%%%%%%%%%%%%%%%%%%%%%%%%%%%%%%%%%%%%%%%%%%%%%%%

%%%%%%%%%%%%%%%%%%%%%%%%%%%%%%%%%%%%%%%%%%%%%%%%%%
%%%%%%%%%%%%%%%%%%%%%%%%%%%%%%%%%%%%%%%%%%%%%%%%%%
%%%%%%%%%%%%%%%%%%%%%%%%%%%%%%%%%%%%%%%%%%%%%%%%%%
\begin{theorem}[Modulational instability index for \eqref{E:bnesq}]\label{thm:bnesq} 
Under Assumption~\ref{A:m}, a sufficiently small, $2\pi/k$-periodic traveling wave of \eqref{E:bnesq} 
is spectrally unstable to long wavelength perturbations if
\begin{equation} \label{def:ind-bnesq}
\text{ind}_{\text{Bnesq}}(k):=\frac{i_1(k)i_2^-(k)i_2^+(k)i_{\text{Bnesq}} (k)}{i_3^-(k) i_3^+(k)}<0,
\end{equation}
where $i_1,i_2^-, i_3^-$ are in \eqref{def:i123}, 
\begin{align}
\begin{split}\label{def:i+}
i_2^+(k)=&(km(k))'+1, \\
i_3^+(k)=&m(k)+m(2k)
\end{split}
\intertext{and}
i_{\text{Bnesq}}(k)=&2i_3^-(k) i_3^+(k)+m^2(2k) i_2^-(k)i_2^+(k).\label{def:i-bnesq}
\end{align}
\end{theorem}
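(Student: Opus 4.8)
The plan is to carry out the spectral-perturbation scheme that yields Theorem~\ref{thm:bbm}, now accounting for the second-order-in-time structure of \eqref{E:bnesq} and using its factorization into a right-running and a left-running copy of \eqref{E:bbm} to organize the computation. First I would use the Lyapunov--Schmidt construction of Appendix~\ref{sec:existence} to produce, for each small amplitude $a$, a $2\pi/k$-periodic traveling wave $u_a(x-ct)$ with speed $c=c(a)$, together with the expansions $u_a=a\cos(kx)+a^2u_2(x)+O(a^3)$ and $c^2=m(k)^2+a^2\gamma_2+O(a^4)$; the profile equation $\M^2(u_a+u_a^2)=c^2u_a+\mathrm{const}$ is the analogue of the \eqref{E:bbm} profile equation with $\M\mapsto\M^2$ and $c\mapsto c^2$, which already signals the replacement $m\mapsto m^2$ visible in \eqref{def:i-bnesq}. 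Passing to the frame moving with speed $c$ and linearizing \eqref{E:bnesq} about $u_a$ gives a second-order-in-time equation; I would recast it as a first-order system by setting $v=\partial_t w$, so that the spectral problem becomes $\L_a\P=\lambda\P$, and then apply the Bloch--Fourier decomposition $w(x)=e^{i\xi x}W(x)$ with $W$ of period $2\pi/k$ to obtain the analytic family $\L_{a,\xi}$ whose spectrum near $\lambda=0$ for $(a,\xi)$ near $(0,0)$ is to be analyzed.

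At $a=0$ the operator has constant coefficients, and a direct computation from the dispersion relation gives the two Bloch branches $\lambda_n^\pm(\xi)=i(\xi+nk)\bigl(c_0\pm m(\xi+nk)\bigr)$ with $c_0^2=m(k)^2$ and $\kappa=\xi+nk$. The $-$ branch is precisely the linearized \eqref{E:bbm} spectrum, so at $\xi=0$ it vanishes at the three Fourier modes $n=0,\pm1$ forced by Assumption~\ref{A:m}(M4); the $+$ branch vanishes at $\xi=0$ only at $n=0$. Hence \emph{four} eigenvalues collide at the origin when $(a,\xi)=(0,0)$, and the zero Fourier mode carries a nontrivial Jordan block coming from the double time derivative. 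I would isolate the corresponding four-dimensional critical subspace and perform a Kato/Lyapunov--Schmidt reduction to a $4\times4$ matrix $M(a,\xi)$ depending analytically on the small parameters, with the understanding that the restriction of this problem to the $-$ branch must reproduce the three-mode reduction behind Theorem~\ref{thm:bbm}.

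Expanding $M(a,\xi)$ in $\xi$ and $a$, I expect the following structure. The $\xi$-linear part of the diagonal yields the group-speed data: the $-$ branch contributes $i_2^-(k)=(km(k))'-1$ and the $+$ branch contributes $i_2^+(k)=(km(k))'+1$ of \eqref{def:i123} and \eqref{def:i+}, while the common curvature $i_1(k)=(km(k))''$ enters at the next order. The $O(a^2)$ corrections couple the central modes to the off-resonant second-harmonic modes $n=\pm2$ on each branch; eliminating them by second-order perturbation theory produces the small divisors $i_3^-(k)=m(k)-m(2k)$ and $i_3^+(k)=m(k)+m(2k)$ in the denominators of \eqref{def:ind-bnesq}, which is the precise mechanism by which the left-running factor $i_3^+$ joins the right-running factor $i_3^-$. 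Spectral instability then occurs exactly when the discriminant of the characteristic polynomial of $M(a,\xi)$ becomes negative for small $\xi$, and I expect this to reduce to the sign of $\text{ind}_{\text{Bnesq}}(k)$; the coupled numerator $i_{\text{Bnesq}}(k)=2i_3^-(k)i_3^+(k)+m^2(2k)i_2^-(k)i_2^+(k)$ of \eqref{def:i-bnesq} should emerge from the block that mixes the two branches, the term $2i_3^-i_3^+$ from the self-coupling of each branch and the term $m^2(2k)i_2^-i_2^+$ from the second-harmonic profile correction $u_2$, with the factor $m^2(2k)$ reflecting the replacement $\M\mapsto\M^2$ noted above.

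The main obstacle will be the degeneracy at the origin. Because \eqref{E:bnesq} is second order in time, the zero Fourier mode contributes a genuine Jordan block at $\lambda=0,\ \xi=0$, so analytic perturbation theory does not apply to the colliding quadruple directly and one must use rescaled, Puiseux-type expansions in a mixed parameter to resolve the splitting. The second, more computational, difficulty is to show that the coupling between the right- and left-running branches contributes exactly $m^2(2k)i_2^-(k)i_2^+(k)$ and no lower-order term that would spoil the factored form of $i_{\text{Bnesq}}$; tracking this requires the explicit second-harmonic correction $u_2$ and the speed correction $\gamma_2$, and is where the bookkeeping is heaviest. Once the reduced $4\times4$ problem is shown to factor through the two \eqref{E:bbm} branches in this way, the criterion \eqref{def:ind-bnesq} follows, and the $-$ branch serves as a built-in consistency check against Theorem~\ref{thm:bbm}.
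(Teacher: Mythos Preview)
Your overall strategy---reduce to a four-dimensional critical subspace, expand the resulting $4\times4$ matrix in $(\xi,a)$, and read off the instability from the sign of the discriminant of its characteristic polynomial---is exactly the paper's, and your identification of the four colliding eigenvalues (the $n=0,\pm1$ modes on the $-$ branch together with the $n=0$ mode on the $+$ branch) matches \eqref{def:w&e}. Where you diverge is in the choice of first-order reduction and, consequently, in the technical obstacle you anticipate.

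By setting $v=\partial_t w$ you build in a Jordan block at the zero Fourier mode already at $(a,\xi)=(0,0)$, and you then (correctly for that formulation) worry about Puiseux expansions. The paper sidesteps this entirely by writing \eqref{E:bnesq} in the Hamiltonian form \eqref{E:main} with variables $(u,q)$; in those coordinates $\L_{0,0}$ is \emph{semisimple} at the origin (algebraic and geometric multiplicity four), so no Puiseux machinery is needed. The genuine difficulty the paper singles out is a different one that you do not mention: the eigenvector \emph{directions} $\e_{n,\xi}^\pm$ in \eqref{def:w&e} rotate with $\xi$ at leading order, so the $a=0$ eigenbasis and the $\xi=0$ generalized eigenbasis (built from $\partial_z\u,\partial_a\u,\partial_{b_1}\u,\partial_{b_2}\u$) do not match to leading order. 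The paper's key construction, \eqref{E:bnesq-1xia}--\eqref{E:bnesq-4xia}, is a single basis $\P_1,\dots,\P_4$ depending jointly on $(\xi,a)$ that interpolates between the two limits; this is what makes the $4\times4$ matrices $\mathbf{B}_{\xi,a},\mathbf{I}_{\xi,a}$ in \eqref{def:bnesq-B} tractable. Your $\pm$-branch factorization is a helpful heuristic for guessing where $i_2^\pm,i_3^\pm$ come from, but the paper does not organize the computation that way: it computes $\mathbf{B}_{\xi,a}$ and $\mathbf{I}_{\xi,a}$ directly from the interpolated basis and then extracts the sign of \eqref{def:ind-bnesq} from the discriminant by a symbolic computation. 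In short, your route would likely reach the same criterion, but the paper's Hamiltonian variables and interpolated basis replace the Jordan-block/Puiseux difficulty you foresee with the (simpler) eigenvector-rotation issue, and the verification that $i_{\text{Bnesq}}$ takes exactly the form \eqref{def:i-bnesq} is done by brute computation rather than by the branch-coupling argument you outline.
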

%%%%%%%%%%%%%%%%%%%%%%%%%%%%%%%%%%%%%%%%%%%%%%%%%%
%%%%%%%%%%%%%%%%%%%%%%%%%%%%%%%%%%%%%%%%%%%%%%%%%%
%%%%%%%%%%%%%%%%%%%%%%%%%%%%%%%%%%%%%%%%%%%%%%%%%%

Theorem~\ref{thm:kdv} and Theorem~\ref{thm:bbm} identify 
four resonances which cause change in the sign of the modulational instability index, 
and hence change in modulational stability and instability:  
\begin{itemize}
\item[(R1)] $(k m(k))''=0$ at some $k>0$, 
i.e. the group speed (see \eqref{def:speeds}) attains an extremum at some wave number $k$; 
\item[(R2)] $(k m(k))'=1=m(0)$ at some $k>0$, i.e. the group speed coincides with the phase speed  
of the limiting long wave as $k\to0$, resulting in the resonance between long and short waves;
\item[(R3)] $m(k)=m(2k)$ at some $k>0$, 
i.e. the phase speeds of the fundamental mode and the second harmonic coincide, 
resulting in the ``second harmonic resonance";
\item[(R4)] $i_{\text{KdV}}(k), i_{\text{BBM}}(k)=0$ at some $k>0$.
\end{itemize}
Theorem~\ref{thm:bnesq} identifies the same four resonances 
which cause change in the sign of the modulational instability index, but in bidirectional propagation.
In other words, the phase and group speeds are signed quantities.
Resonances (R1) through (R3) are {\em dispersive}, and \eqref{E:bbm}, \eqref{E:bnesq}, and \eqref{E:kdv} share in common.
Resonance (R4), on the other hand, depends on the {\em nonlinearity} of the equation. 
We shall illustrate this in Section~\ref{sec:fdispersion} 
by comparing \eqref{E:bbm}, \eqref{E:bnesq}, and \eqref{E:kdv} with fractional dispersion.

Thanks to the Galilean invariance\footnote{
Note that \eqref{E:kdv}, in the coordinate frame moving at the speed $c$, remains invariant under
\[
u\mapsto u+v,\qquad c\mapsto c+2v
\]
for any $v>0$.}, the result of Theorem~\ref{thm:kdv} depends merely on the wave amplitude,
whereas the results of Theorem~\ref{thm:bbm} and Theorem~\ref{thm:bnesq} depend on the wave height.
A small amplitude, but not necessarily small height, periodic traveling wave of \eqref{E:bbm} or \eqref{E:bnesq}
may be studied in like manner. But the modulational instability indices become quite complicated. 
Hence we do not include them here.

\

The proofs of Theorem~\ref{thm:bbm} and Theorem~\ref{thm:bnesq} 
follow along the same line as the arguments in \cite{HJ2}, for instance,
inspecting how the spectrum at the origin, in the case of the zero Floquet exponent,
varies with small values of the Floquet exponent and the amplitude parameter.
But the proof of Theorem~\ref{thm:bnesq} necessitates some nontrivial modifications.
Specifically, in the case of the zero Floquet exponent and a small but nonzero amplitude parameter, 
we find four eigenfunctions corresponding to the zero eigenvalue of the associated linearized operator.
In the case of a nonzero Floquet exponent and the zero amplitude, on the other hand,
eigenfunctions for the near-zero eigenvalues do vary with the Floquet exponent at the leading order.
Thus we must concoct basis functions which depend continuously on 
small values of the Floquet exponent and the amplitude parameter.
In the case of \eqref{E:bbm} (and \eqref{E:kdv}), to compare, 
eigenfunctions in the case of the zero Floquet exponent
agree, to the leading order, with eigenfunctions for nonzero Floquet exponents.

Theorem~\ref{thm:bnesq} is merely a sufficient condition for modulational instability. 
In case the modulational instability index is positive, 
the associated linearized operator admits either four stable spectra or four unstable ones,
depending on the nature of the roots of a characteristic polynomial,
whose coefficients are made up of inner products of the basis elements near the origin in the spectral plane 
and involve asymptotic expansions of the associated linearized operator for small Floquet exponents.  
We shall discuss in Appendix~\ref{sec:disc} how to classify the roots of a quartic polynomial,
which will help us to derive supplementary instability indices.
We do not include the detail in Theorem~\ref{thm:bnesq}. Instead in Section~\ref{sec:bnesq1},
we shall illustrate how to use it to determine 
the modulational stability and instability for the regularized Boussinesq equation.

\

We shall discuss in Section~\ref{sec:applications} 
some applications of Theorems~\ref{thm:bbm} and Theorem~\ref{thm:bnesq}. 
In particular, we shall show that a sufficiently-small, $2\pi/k$-periodic traveling wave of the BBM equation
is spectrally unstable to long wavelength perturbations if $k>\sqrt{3}$ 
and spectrally stable to square integrable perturbations if $k<2\sqrt{3/5}$. 
To compare, well-known is that periodic traveling waves of the KdV equation (not necessarily of small amplitudes) are spectrally stable. 
Hence the BBM equation appears to qualitatively reproduce the Benjamin-Feir instability\footnote{
A small amplitude, periodic traveling wave in water goes unstable 
if the wave number of the underlying wave times the undisturbed fluid depth exceeds $1.363\dots$;
see \cite{BF, Whitham1967}, for instance.} of Stokes waves when the KdV equation fails.
But Resonance (R1) following Theorem~\ref{thm:bbm} results in the instability in \eqref{E:bbm1},
whereas Resonances (R1) through (R3) do not occur in the water wave problem, for which $m^2(k)=\tanh k/k$
(see \cite{HJ2, HP2}, for instance).
In other words, the modulational instability mechanism in \eqref{E:bbm1} is different from that in water waves.

The result agrees with that in \cite{J2010}, where the author showed that
periodic traveling waves of the BBM equation with sufficiently small wave numbers
(but not necessarily small amplitudes) are modulationally stable. 
The result complements that in \cite{Haragus08}, 
where the author derived a similar modulational instability index 
and showed the stability of periodic traveling waves of \eqref{E:bbm1} with sufficiently small amplitudes,
but for $c>1$. Here we study when $c<1$.

Moreover, we shall show that all sufficiently small, periodic traveling waves of the regularized Boussinesq equation
are stable to square integrable perturbations in the vicinity of the origin in the spectral plane. 
To the best of the authors' knowledge, these are new findings. 

The treatment in Section~\ref{sec:bnesq} may extend to a broad class of systems of nonlinear dispersive equations. 
In a forthcoming work \cite{HP2}, in particular, the authors will propose 
bi-directional Whitham, or Boussinesq-Whitham, equations for shallow water waves
and demonstrate the instability of the Benjamin-Feir kind.
In contrast, \eqref{E:bnesq}, or \eqref{E:bbm}, 
for which $m^2(k)=\tanh k/k$, describing dispersion of water waves, fails to capture such instability.

%%%%%%%%%%%%%%%%%%%%%%%%%%%%%%%%%%%%%%%%%%%%%%%%%%
%%%%%%%%%%%%%%%%%%%%%%%%%%%%%%%%%%%%%%%%%%%%%%%%%%
%%%%%%%%%%%%%%%%%%%%%%%%%%%%%%%%%%%%%%%%%%%%%%%%%%
\subsubsection*{Notation}
%%%%%%%%%%%%%%%%%%%%%%%%%%%%%%%%%%%%%%%%%%%%%%%%%%
%%%%%%%%%%%%%%%%%%%%%%%%%%%%%%%%%%%%%%%%%%%%%%%%%%
%%%%%%%%%%%%%%%%%%%%%%%%%%%%%%%%%%%%%%%%%%%%%%%%%%
Let $L^p_{2\pi}$ in the range $p\in [1,\infty]$ denote the space of 
$2\pi$-periodic, measurable, real or complex valued functions over $\mathbb{R}$ such that 
\[
\|f\|_{L^p_{2\pi}}=\Big(\frac{1}{2\pi}\int^\pi_{-\pi} |f|^p~dx\Big)^{1/p}<+\infty \quad \text{if}\quad p<\infty
\]
and essentially bounded if $p=\infty$. 
Let $H^1_{2\pi}$ denote the space of $L^2_{2\pi}$-functions such that $f' \in L^2_{2\pi}$
and $H^\infty_{2\pi}=\bigcap_{k=0}^\infty H^k_{2\pi}$. For $f \in L^1_{2\pi}$, we write that 
\[
f(z) \sim \sum_{n\in \mathbb{Z}} \widehat{f}_n e^{inz}, \quad\text{where}\quad
\widehat{f}_n=\frac{1}{2\pi}\int^\pi_{-\pi}f(z)e^{-inz}~dz.
\]
If $f \in L^p_{2\pi}$, $p>1$, moreover, then the Fourier series converges to $f$ pointwise almost everywhere. 
For $0<\alpha<1$, we define the Sobolev space of fractional order via the norm 
\[
\|f\|_{H^{\alpha}_{2\pi}}^2=\widehat{f}_0^2+\sum_{n\in\mathbb{Z}}|n|^{2\alpha}|\widehat{f}_n|^2,
\]
and we define the $L^2_{2\pi}$-inner product as
\begin{equation}\label{def:prod1}
\langle f,g\rangle_{L^2_{2\pi}}=\frac{1}{2\pi}\int^\pi_{-\pi} f(z)\overline{g}(z)~dz.
=\sum_{n\in\mathbb{Z}} \widehat{f}_n\overline{\widehat{g}_n}
\end{equation}
Let $L^p_{2\pi}\times L^p_{2\pi}$ in the range $p\in[1,\infty]$ denote the space of pairs of $L^p_{2\pi}$-functions.
We extend the $L^2_{2\pi}$-inner product as
\begin{equation}\label{def:prod2}
\langle (f_1,f_2),(g_1,g_2) \rangle_{L^2_{2\pi}\times L^2_{2\pi}} = \langle f_1,g_1\rangle_{L^2_{2\pi}}+\langle f_2,g_2\rangle_{L^2_{2\pi}}.
\end{equation}
%\[
%\|(f_1,f_2)\|^2_{L^p_{2\pi}\times L^p_{2\pi}}=\|f\|^2_{L^p_{2\pi}}+\|g\|^2_{L^p_{2\pi}}.
%\]

%%%%%%%%%%%%%%%%%%%%%%%%%%%%%%%%%%%%%%%%%%%%%%%%%%
%%%%%%%%%%%%%%%%%%%%%%%%%%%%%%%%%%%%%%%%%%%%%%%%%%
%%%%%%%%%%%%%%%%%%%%%%%%%%%%%%%%%%%%%%%%%%%%%%%%%%
\section{Equations of Benjamin-Bona-Mahony type}\label{sec:bbm}
%%%%%%%%%%%%%%%%%%%%%%%%%%%%%%%%%%%%%%%%%%%%%%%%%%
%%%%%%%%%%%%%%%%%%%%%%%%%%%%%%%%%%%%%%%%%%%%%%%%%%
%%%%%%%%%%%%%%%%%%%%%%%%%%%%%%%%%%%%%%%%%%%%%%%%%%
We discuss how to follow the arguments in \cite{HJ2} to prove Theorem~\ref{thm:bbm}. 
Details are found in \cite{HJ2} and references therein. Hence we merely hit the main points. 

\

By a  traveling wave of \eqref{E:bbm}, we mean a solution of the form $u(x,t)=u(x-ct)$ 
for some $c>0$, the wave speed, and $u$ satisfying by quadrature that 
\[
\mathcal{M} (u+u^2)-cu=(c-1)^2b
\]
for some $b\in\mathbb{R}$. We seek a $2\pi/k$-periodic traveling wave. That is, $k>0$ is the wave number 
and, abusing notation, $u$ is a $2\pi$-periodic function of $z:=kx$, satisfying that
\begin{equation}\label{E:quad-bbm}
\mathcal{M}_{k}(u+u^2)-cu=(c-1)^2b.
\end{equation}
Here and elsewhere,
\begin{equation}\label{def:Mk}
\mathcal{M}_{k} e^{inz}=m(k n)e^{inz}\quad \text{for}\quad n \in \mathbb{Z}
\end{equation}
and it is extended by linearity and continuity. 
Note from (M2) of Assumption~\ref{A:m} that $\M_k$ maps even functions to even functions. 
Note from (M3) of Assumption~\ref{A:m} that
\[
\M_k:H_{2\pi}^s\rightarrow H_{2\pi}^{s-\alpha}\qquad\text{for all $k>0$}\quad\text{for all $s\geq 0$}
\]
is bounded. Consequently, if $u \in H^{1}_{2\pi}$ solves \eqref{E:quad-bbm} 
for some $c>0$, $k>0$ and $b\in\mathbb{R}$ then $u\in H^\infty_{2\pi}$.
In the case of $\alpha<0$, indeed, it follows from the Sobolev inequality that
\[ 
cu=\M_k(u+u^2)-(c-1)^2b\in H_{2\pi}^{1-\alpha}.
\]
In the case of $\alpha>0$, similarly, (see \cite[Proposition~2.2 and Lemma~5.1]{HJ1}, for instance)
\[
\M_k^{-1}u=\frac{1}{c-\M_k}u^2-(c-1)^2b \in H^1.
\]
The claim then follows from a bootstrapping argument.
%Unlike the KdV analog, however, \eqref{E:quad-bbm} does not enjoy Galilean invariance. 

\

For an arbitrary $k>0$, a straightforward calculation reveals that
\[
u_0(k,c,b)=b(c-1)+O(b^2)
\]
makes a constant solution of \eqref{E:quad-bbm} for all $c>0$ and $|b|$ sufficiently small.
(The other constant solution is $u=(1-b)(c-1)+O(b^2)$, which we discard for the sake of near-zero solutions.)
We are interested in determining at which value of $c$ there bifurcates 
a family of non-constant $H^1_{2\pi}$-solutions, and hence smooth solutions, of \eqref{E:quad-bbm}.
A necessary condition, it turns out, is that 
the linearized operator of \eqref{E:quad-bbm} about $u_0$ allows a nontrivial kernel. 
This is not in general a sufficient condition. But bifurcation does take place if the kernel is one dimensional. 
Under (M4) of Assumption~\ref{A:m}, a straightforward calculation reveals that 
\[
\ker(\M_k(1+2u_0)-c_0)=\text{span}\{\cos z\}
\]
in the sector of even functions in $H^1_{2\pi}$, provided that  
\begin{equation}\label{def:c0}
c_0(k,b):=m(k)(1+2u_0)= m(k)+2bm(k)(m(k)-1)+O(b^2).
\end{equation}
Therefore
\begin{equation}\label{def:u0}
u_0(k,b):=u_0(k,c_0,b)=b(m(k)-1)+O(b^2).
\end{equation}

For arbitrary $k>0$ and $|b|$ sufficiently small, one may then employ a Lyapunov-Schmidt reduction 
and construct a one-parameter family of non-constant, even and smooth solutions of \eqref{E:quad-bbm} 
near $u=u_0(k,b)$ and $c=c_0(k,b)$. Below we summarize the conclusion.
The proof is in Appendix~\ref{sec:existence}.

%%%%%%%%%%%%%%%%%%%%%%%%%%%%%%%%%%%%%%%%%%%%%%%%%%
%%%%%%%%%%%%%%%%%%%%%%%%%%%%%%%%%%%%%%%%%%%%%%%%%%
%%%%%%%%%%%%%%%%%%%%%%%%%%%%%%%%%%%%%%%%%%%%%%%%%%
\begin{lemma}[Existence]\label{lem:existence} 
Under Assumption~\ref{A:m}, for each $k>0$ and $|b|$ sufficiently small, 
a one-parameter family of $2\pi/k$-periodic traveling waves of \eqref{E:bbm} exists
and, abusing notation, 
\[
u(x,t)=u(a,b)(k(x-c(k,a,b)t))=:u(k,a,b)(z)
\]
for $|a|$ sufficiently small; $u$ and $c$ depend analytically on $k$, $a$, $b$, and 
$u$ is smooth, even and $2\pi$-periodic in $z$, and $c$ is even in $a$. Furthermore,
\begin{align}
u(k,a,b)(z)=&b(m(k)-1)+a \cos z  \label{E:u(k,a,b)} \\
&+\frac{1}{2}a^2\Big(\frac{1}{m(k)-1}+\frac{m(2k)}{m(k)-m(2k)}\cos 2z\Big) +O(a(a^2+b)), \notag \\
c(k,a,b)(z)=&m(k)+2bm(k)(m(k)-1) \label{E:c(k,a,b)}\\
&+a^2 m(k)\Big(\frac{1}{m(k)-1}+\frac{1}{2}\frac{m(2k)}{m(k)-m(2k)}\Big)+O(a(a^2+b)) \notag
\end{align}
as $a,b \to 0$.
\end{lemma}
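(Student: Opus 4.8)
The plan is to prove Lemma~\ref{lem:existence} via a Lyapunov--Schmidt reduction applied to the quadrature equation \eqref{E:quad-bbm}, which I rewrite as $F(u,c,k,b)=0$ where $F(u,c,k,b)=\M_k(u+u^2)-cu-(c-1)^2 b$. The starting point is the constant solution $u_0(k,b)$ and the bifurcation speed $c_0(k,b)$ from \eqref{def:u0} and \eqref{def:c0}, at which the linearized operator $L_0:=\M_k(1+2u_0)-c_0$ has, under (M4), the one-dimensional kernel $\mathrm{span}\{\cos z\}$ in the sector of even functions of $H^1_{2\pi}$. First I would set up the functional-analytic framework: view $F$ as a smooth (indeed analytic) map $H^1_{2\pi,\mathrm{even}}\times\mathbb{R}\to L^2_{2\pi,\mathrm{even}}$, where the analyticity in $u$ follows because the only nonlinearity is the quadratic term $u^2$ and $\M_k$ is bounded on the relevant Sobolev scale by (M3); analyticity in the parameters $k,c,b$ is inherited from the smoothness of $m$ in (M1).

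Next I would carry out the reduction itself. Let $P$ denote the orthogonal projection in $L^2_{2\pi}$ onto $\mathrm{span}\{\cos z\}$ and $Q=I-P$. Writing $u=u_0+a\cos z+w$ with $w\in Q H^1_{2\pi,\mathrm{even}}$ and $Pw=0$, I split $F=0$ into the range equation $QF=0$ and the bifurcation equation $PF=0$. Since $L_0$ restricted to $Q H^1_{2\pi,\mathrm{even}}$ is invertible onto $Q L^2_{2\pi,\mathrm{even}}$ (the kernel has been projected out and (M4) guarantees no other even harmonic lies in the kernel), the implicit function theorem solves the range equation for $w=w(k,a,b)$ as an analytic function with $w=O(a^2+ab)$ and $w(k,0,b)=0$. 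Substituting back, the bifurcation equation becomes a scalar analytic relation determining $c=c(k,a,b)$; because $c$ enters through $-cu-(c-1)^2b$ and the underlying equation is even in the amplitude (the reflection $z\mapsto z+\pi$ sends $a\cos z$ to $-a\cos z$ while fixing even constant and $\cos 2z$ modes), $c$ comes out even in $a$, as claimed.

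The asymptotic expansions \eqref{E:u(k,a,b)} and \eqref{E:c(k,a,b)} I would obtain by expanding $w$ and $c$ in powers of $a$ and $b$ and matching order by order. At order $a^2$ the forcing in the range equation is the projection of $(a\cos z)^2=\tfrac12 a^2(1+\cos 2z)$ through $\M_k$, and inverting $L_0$ on the constant and $\cos 2z$ modes produces exactly the coefficients $\tfrac12 a^2\bigl(\tfrac{1}{m(k)-1}+\tfrac{m(2k)}{m(k)-m(2k)}\cos 2z\bigr)$, since $L_0$ acts on the constant mode as multiplication by $m(k)-c_0+O(b)=m(k)-m(k)+O(b)$ — here I must be careful, as the constant mode is nearly singular; tracking the $O(b)$ shift in $c_0$ keeps it invertible and the reciprocal $\tfrac{1}{m(k)-1}$ emerges from the correct combination of $\M_k$ and the speed. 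The $\cos 2z$ coefficient $\tfrac{m(2k)}{m(k)-m(2k)}$ comes from $L_0$ acting on $\cos 2z$ as $m(2k)-c_0=m(2k)-m(k)+O(b)$, whose inverse is nonsingular precisely by (M4).

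The main obstacle I anticipate is the near-resonance of the constant mode. The coefficient $\tfrac{1}{m(k)-1}$ diverges as $m(k)\to1$, i.e. as $k\to0$ by (M2), so the expansion is only valid for $k$ bounded away from $0$, and one must verify that the $O(b)$ correction to $c_0$ genuinely regularizes $L_0$ on the constant mode for $|b|$ small — otherwise the reduction breaks down. Handling this uniformly, together with confirming that the range operator $QL_0Q$ has a bounded inverse with norm controlled independently of the small parameters (so that the implicit function theorem applies on a uniform neighborhood), is the delicate part; the remaining bookkeeping to reach the $O(a(a^2+b))$ error terms is then routine.
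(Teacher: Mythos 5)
Your overall strategy---a Lyapunov--Schmidt reduction of \eqref{E:quad-bbm} about the constant solution, solving the range equation by the implicit function theorem and then the bifurcation equation for the wave speed---is the same as the paper's proof in Appendix~\ref{sec:existence}; the only structural difference is that you restrict to the even sector, where the kernel is $\mathrm{span}\{\cos z\}$, while the paper works in the full space with the two-dimensional kernel $\mathrm{span}\{e^{\pm iz}\}$ and disposes of the extra bifurcation equation by translation and reflection symmetry. That difference is harmless, but two steps in your execution are respectively wrong and missing. First, your ``main obstacle'' rests on a miscalculation: since $\M_k e^{inz}=m(kn)e^{inz}$, the operator $L_0=\M_k(1+2u_0)-c_0$ acts on the constant mode ($n=0$) as multiplication by $(1+2u_0)m(0)-c_0=(1+2u_0)(1-m(k))$, not by ``$m(k)-c_0+O(b)=O(b)$'' as you assert---you evaluated the symbol at the carrier wave number $k$ instead of at $0$. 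Hence there is no near-singular constant mode, no need for the $O(b)$ shift in $c_0$ to regularize anything, and the coefficient $\tfrac{1}{m(k)-1}$ in \eqref{E:u(k,a,b)} arises simply from inverting $1-m(k)$, which is nonzero whenever $m(k)\neq m(0)=1$ (the same condition that makes the even kernel one-dimensional). The delicate point you anticipate does not exist.

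Second---and this is the genuine gap---the bifurcation equation cannot ``determine $c=c(k,a,b)$'' by a direct application of the implicit function theorem, because it is degenerate at $a=0$: constant solutions exist for \emph{every} $c$ near $c_0$, so the projection of $F$ onto $\cos z$ vanishes identically when $a=0$, and therefore its derivative with respect to $c$ also vanishes there. One must first factor out the amplitude (the paper writes its weighted bifurcation functional as $Q_+(a,a,r,b)=a^2(\pi r+R(a,r,b))$, using that $a^{-1}V(a,a,r,b)$ is analytic) and only then apply the implicit function theorem to the quotient equation $\pi r+R(a,r,b)=0$, where the nondegeneracy $\partial_r(\pi r+R)(0,0,b)=\pi\neq 0$ holds because $R(0,0,b)=\partial_rR(0,0,b)=0$. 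Without this factorization your argument stalls exactly where the speed correction---and with it the expansion \eqref{E:c(k,a,b)}---is constructed. A smaller repair: the setting $F\colon H^1_{2\pi}\to L^2_{2\pi}$ does not make $L_0$ an isomorphism between the complementary subspaces for general $\alpha$ in (M3); the paper instead takes values in $H^1_{2\pi}$ when $\alpha<0$ and replaces $F$ by $u+u^2-c\M_k^{-1}u-(c-1)^2b$ when $\alpha>0$, and your functional framework should be adjusted accordingly.
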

%%%%%%%%%%%%%%%%%%%%%%%%%%%%%%%%%%%%%%%%%%%%%%%%%%
%%%%%%%%%%%%%%%%%%%%%%%%%%%%%%%%%%%%%%%%%%%%%%%%%%
%%%%%%%%%%%%%%%%%%%%%%%%%%%%%%%%%%%%%%%%%%%%%%%%%%

In the remainder of the section we assume that $b=0$; that means, loosely speaking, the wave height is small.
A small amplitude, but not necessarily small height, periodic traveling wave of \eqref{E:bbm} may be studied
in like manner. But expressions become quite complicated. Hence we do not pursue here.
Let $u=u(k,a,0)$ and $c=c(k,a,0)$ for $k>0$ and $|a|$ sufficiently small, be as in Lemma~\ref{lem:existence}. 
We are interested in its stability and instability. 

\

Linearizing \eqref{E:bbm} about $u$ in the coordinate frame moving at the speed $c$, we arrive at that
\[
v_t+k\partial_z(\mathcal{M}_{k}(1+2u)-c)v=0.
\]
Seeking a solution of the form $v(z,t)=e^{\lambda k t}v(z)$, $\lambda\in\mathbb{C}$ and $v\in L^2(\mathbb{R})$, 
moreover, we arrive at that
\begin{equation}\label{E:eigen-bbm}
\lambda v=\partial_z(-\mathcal{M}_{k}(1+2u)+c)v=:\mathcal{L}(k,a)v.
\end{equation}
We say that $u$ is {\em spectrally unstable} if 
the $L^2(\mathbb{R})$-spectrum of $\L$ intersects the open, right half plane of $\mathbb{C}$
and it is {\em stable} otherwise. Note that $v$ need not have the same period as $u$.
Since the spectrum of $\L$ is symmetric with respect to the reflections about the real and imaginary axes,
$u$ is spectrally unstable if and only if 
the $L^2(\mathbb{R})$-spectrum of $\L$ is {\em not} contained in the imaginary axis. 

It follows from Floquet theory (see \cite{BHJ}, for instance, and references therein) that 
nontrivial solutions of \eqref{E:eigen-bbm} cannot be integrable over $\mathbb{R}$.
Rather they are at best bounded over $\mathbb{R}$.
Furthermore it is well-known that the $L^2(\mathbb{R})$-spectrum of $\mathcal{L}$ is essential.
In the case of the KdV equation, for instance, the (essential) spectrum of the associated linearized operator
may be studied with the help of Evans function techniques and other ODE methods.
Confronted with a {\em nonlocal} operator, unfortunately, they are not viable to use.
Instead, it follows from Floquet theory (see \cite{BHJ}, for instance, and references therein) 
that $\lambda\in\mathbb{C}$ belongs to the $L^2(\mathbb{R})$-spectrum of $\mathcal{L}$ if and only if 
\begin{equation}\label{E:Lxi}
\lambda\phi=e^{-i\xi z}\mathcal{L}(k,a)e^{i\xi z}\phi=:\mathcal{L}_\xi(k,a)\phi
\end{equation}
for some $\xi\in[-1/2,1/2)$ and $\phi\in L^2_{2\pi}$. For each $\xi\in[-1/2,1/2)$, 
the $L^2_{2\pi}$-spectrum of $\mathcal{L}_\xi$ comprises of discrete eigenvalues of finite multiplicities. 
Moreover
\begin{equation*}
\text{spec}_{L^2(\mathbb{R})}(\mathcal{L}(k,a))=
\bigcup_{\xi\in[-1/2,1/2)}\text{spec}_{L^2_{2\pi}}(\mathcal{L}_\xi(k,a)).
\end{equation*}
In other words, the continuous $L^2(\mathbb{R})$-spectrum of $\mathcal{L}$ may be parametrized 
by the family of discrete $L^2_{2\pi}$-spectra of $\mathcal{L}_\xi$'s. Since 
\[
\text{spec}_{L^2_{2\pi}}(\L_\xi)=\overline{\text{spec}_{L^2_{2\pi}}(\L_{-\xi})},
\]
it suffices to take $\xi\in [0,1/2]$.

\subsubsection*{Notation}
In the remainder of the section, $k>0$ is fixed and suppressed to simplify the exposition, unless specified otherwise. Let
\[
\L_{\xi,a}=\L_\xi(k,a).
\]

\medskip

The eigenvalue problem \eqref{E:Lxi} must in general be investigated numerically. 
But in case when $\lambda$ is near the origin and $\xi$ is small, 
we may take a perturbation theory approach in \cite{HJ2}, for instance, and address it analytically.
Specifically, we first study the spectrum of $\mathcal{L}_{0,a}$ at the origin.
We then examine how the spectrum near the origin of $\mathcal{L}_{\xi,a}$
bifurcates from that of $\mathcal{L}_{0,a}$ for $\xi$ small.
Note in passing that $\xi=0$ corresponds to the same period perturbations as the underlying wave 
and $\xi$ small physically amounts to long wavelength perturbations or slow modulations of the underlying wave.

\

In the case of $a=0$, namely the zero solution, a straightforward calculation reveals that 
\begin{equation} \label{E:a=0bbm}
\L_{\xi,0} e^{inz}=i\omega_{n,\xi}e^{inz}
\quad \text{for all $n\in\mathbb{Z}$}\quad\text{for all $\xi\in[0,1/2]$},
\end{equation}
where 
\begin{equation}\label{def:w-bbm}
\omega_{n,\xi}=(\xi+n)(m(k)-m(k(\xi+n))).
\end{equation}
%We pause to remark that \eqref{E:a=0bbm}-\eqref{def:w-bbm} imply that 
In particular, the zero solution of \eqref{E:bbm} is spectrally stable to square integrable perturbations.
Observe that 
\[
\omega_{1,0}=\omega_{-1,0}=\omega_{0,0}=0,
\]
and $\omega_{n,0}\neq 0$ otherwise. Therefore, 
zero is an $L^2_{2\pi}$-eigenvalue of $\mathcal{L}_{0,0}$ with algebraic and geometric multiplicity three, and
\begin{equation}\label{E:eigen0-bbm}
\cos z, \qquad \sin z\quad\text{and}\quad 1
\end{equation}
form a (real-valued) orthogonal basis of the corresponding eigenspace. 
For $\xi$ small (and $a=0$), furthermore, 
they form an orthogonal basis of the spectral subspace associated with 
eigenvalues $i\omega_{1,\xi}$, $i\omega_{-1,\xi}$, $i\omega_{0,\xi}$ of $\mathcal{L}_{\xi,0}$. 

For $|a|$ small but $\xi=0$, on the other hand, zero is a generalized $L_{2\pi}^2$-eigenvalue of $\mathcal{L}_{0,a}$ 
with algebraic multiplicity three and geometric multiplicity two, and 
\begin{align}
\phi_1(z) &=:\frac{1}{2m(k)(m(k)-1)}((\partial_b c)(\partial_a u)-(\partial_a c)(\partial_b u))(k,a,0)(z) \notag\\
&=\cos z-\frac{1}{2}a\frac{m(2k)}{m(k)-m(2k)}+a\frac{m(2k)}{m(k)-m(2k)}\cos 2z+O(a^2)\label{def:p1}\\
\phi_2(z) &=:-\frac{1}{a}\partial_z u(k,a,0)(z)= \sin z+a\frac{m(2k)}{m(k)-m(2k)}\sin 2z+O(a^2)\label{def:p2}\\
\phi_3(z) &=:\frac{1}{m(k)-1}\partial_b u(k,a,0)(z)= 1+O(a^2)\label{def:p3}
\end{align}
form a basis of the corresponding generalized eigenspace. 
Indeed, differentiating \eqref{E:quad-bbm} with respect to $z$, $a$, $b$, we find that
\[
\mathcal{L}_{0,a}(\partial_z u)=0,\qquad
\mathcal{L}_{0,a}(\partial_a u)=(\partial_a c)(\partial_z u),\qquad
\mathcal{L}_{0,a}(\partial_b u)=(\partial_b c)(\partial_z u),
\]
respectively, and \eqref{def:p1}-\eqref{def:p3} follows at once; see \cite[Lemma~3.1]{HJ2} for details. 
In the case of $a=0$, note that \eqref{def:p1}-\eqref{def:p3} reduce to \eqref{E:eigen0-bbm}.

\

To recapitulate, in the case of $\xi$ small and $a=0$, $\L_{\xi,0}$ possesses three purely imaginary eigenvalues
near the origin and functions in \eqref{E:eigen0-bbm} form an orthogonal basis of the associated spectral subspace.
In the case of $\xi=0$ and $a$ small, moreover, $\L_{0,a}$ possesses three eigenvalues at the origin
and functions in \eqref{def:p1}-\eqref{def:p3} form a basis of the associated eigenspace. 
In order to study how three eigenvalues at the origin vary with $\xi$ and $|a|$ small,
we proceed as in \cite{HJ2} and compute $3\times3$ matrices
\begin{equation}\label{def:BI}
\mathbf{B}_{\xi,a}=\left( \frac{\l \L_{\xi,a}\phi_j, \phi_k\r}{\l \phi_j, \phi_j\r}\right)_{j,k=1,2,3}
\quad\text{and}\quad
\mathbf{I}_{a}=\left( \frac{\l \phi_j, \phi_k\r}{\l \phi_j, \phi_j\r}\right)_{j,k=1,2,3},
\end{equation}
where $\phi_j$'s, $j=1,2,3$, are in \eqref{def:p1}-\eqref{def:p3} 
and $\langle\,,\rangle=\langle\,,\rangle_{L^2_{2\pi}}$ is in \eqref{def:prod1}. 
Note that $\mathbf{B}_{\xi,a}$ and $\mathbf{I}_{a}$, respectively, represent actions of 
$\mathcal{L}_{\xi,a}$ and the identity on the spectral subspace associated with three eigenvalues at the origin.
For $\xi$ and $|a|$ sufficiently small, eigenvalues of $\L_{\xi,a}$ agree in location and multiplicity
with the roots of the characteristic equation $\det(\mathbf{B}_{\xi,a}-\lambda\mathbf{I}_{a})=0$;
see \cite[Section~4.3.5]{K}, for instance, for details.

Using \eqref{E:eigen-bbm}, \eqref{E:Lxi} and \eqref{E:u(k,a,b)}, \eqref{E:c(k,a,b)}, 
we make a Baker-Campbell-Hausdorff expansion to write that 
\begin{align}
\mathcal{L}_{\xi,a}
%=& e^{-i\xi z}\partial_z(-\M_k+m(k))e^{i\xi z}-2a e^{-i\xi z}\partial_z\M_k\cos z e^{i\xi z}+O(a^2)\notag\\
=&\mathcal{L}_{0,0}+i\xi [\mathcal{L}_{0,0},z]-\frac{\xi^2}{2}[[\mathcal{L}_{0,0},z],z]\notag\\
&-2a\M_k \partial_z (\cos z)-2i\xi a[\partial_z\M_k,z]\cos z+O(\xi^3+\xi^2 a+a^2)\notag\\
=:&L-2a\M_k \partial_z (\cos z)-2i\xi a M_1\cos z+O(\xi^3+\xi^2 a+a^2) \label{def:L-bbm}
\end{align}
as $\xi, a\to 0$. 
Note that $M_1=[\mathcal{L}_{0,0},z]$ and $[[\mathcal{L}_{0,0},z],z]$ are well defined in $L^2_{2\pi}$ 
even though $z$ is not. Note moreover that $L=\mathcal{L}_{\xi,0}$ up to the second order for $\xi\ll 1$
and $M_1$ is the $O(\xi)$ term in the asymptotic expansion of $\mathcal{L}_{\xi,0}$ for $\xi\ll1$.

We use \eqref{E:a=0bbm} and \eqref{def:w-bbm}, or its Taylor expansion (see (M1) of Assumption~\ref{A:m}),
%\[ m(k(\pm n+\xi))=m(kn)\pm k\xi m'(kn)+\frac{k^2\xi^2}{2}m''(kn)+O(\xi^3)\]
to compute that
\begin{align*}
\L_{\xi,0} e^{\pm inz}=\pm in(m(k)-m(kn))e^{\pm inz}&+i\xi(m(k)-m(kn)-km'(kn))e^{\pm inz} \\
&\mp \frac12\xi^2(2km'(kn)+k^2m''(kn))e^{\pm inz}+O(\xi^3)
\end{align*}
as $\xi\to 0$.
%\begin{align}
%\L_{\xi,0}\left\{ \begin{matrix} \cos nz\\ \sin nz\end{matrix}\right\}=&
%\mp n(m(k)-m(kn))\left\{ \begin{matrix} \sin nz\\ \cos nz\end{matrix}\right\}\label{E:bbm-actL}\\
%&+i\xi(m(k)-m(kn)-km'(kn))\left\{ \begin{matrix} \cos nz\\ \sin nz\end{matrix}\right\} \notag\\
%&\pm \frac{i\xi^2}{2}(2km'(kn)+k^2m''(kn))\left\{ \begin{matrix} \sin nz\\ \cos nz\end{matrix}\right\}+O(\xi^3), \notag\\
%M_1\left\{ \begin{matrix} \cos nz\\ \sin nz\end{matrix}\right\}=&(m(k)-m(kn)-km'(kn))\left\{ \begin{matrix} \cos nz\\ \sin nz\end{matrix}\right\}. \notag
%\end{align} 
Therefore we infer that
\begin{equation*}
L 1=i\xi(m(k)-1)\qquad\text{and}\quad M_11= m(k)-1.
\end{equation*}
Similarly,
\begin{align*}
L\left\{ \begin{matrix} \cos z\\ \sin z\end{matrix}\right\}=&
-i\xi km'(k)\left\{ \begin{matrix} \cos z\\ \sin z\end{matrix}\right\} 
\pm \frac12\xi^2(2km'(k)+k^2m''(k))\left\{ \begin{matrix} \sin z\\ \cos z\end{matrix}\right\},\\
M_1\left\{ \begin{matrix} \cos z\\ \sin z\end{matrix}\right\}
=&-km'(k)\left\{ \begin{matrix} \cos z\\ \sin z\end{matrix}\right\} \notag
\end{align*} 
and
\begin{align*}
L\left\{ \begin{matrix} \cos 2z\\ \sin 2z\end{matrix}\right\}=&
\mp 2(m(k)-m(2k))\left\{ \begin{matrix} \sin 2z\\ \cos 2z\end{matrix}\right\}
+i\xi(m(k)-m(2k)-km'(2k))\left\{ \begin{matrix} \cos 2z\\ \sin 2z\end{matrix}\right\} \\
&\hspace*{130pt}\pm \frac12\xi^2(2km'(2k)+k^2m''(2k))\left\{ \begin{matrix} \sin 2z\\ \cos 2z\end{matrix}\right\},\\
M_1\left\{ \begin{matrix} \cos 2z\\ \sin 2z\end{matrix}\right\}=&(m(k)-m(2k)-km'(2k))\left\{ \begin{matrix} \cos 2z\\ \sin 2z\end{matrix}\right\}. \notag
\end{align*} 

Substituting \eqref{def:p1}-\eqref{def:p3} into \eqref{def:L-bbm},
and using the above and \eqref{E:a=0bbm}, we make a lengthy but straightforward calculation to find that
\begin{align*}
\mathcal{L}_{\xi,a} \phi_1 =&-i\xi k m'(k)\cos z \\ &-i\xi a\Big( 1+\frac{m(2k)(m(k)-1)}{2(m(k)-m(2k))}\Big)\\
&-i\xi a\Big( m(2k)+2km'(2k)-\frac{m(2k)(m(k)-m(2k)-2km'(2k))}{m(k)-m(2k)}\Big)\cos 2z\notag \\
&+\frac12\xi^2(2km'(k)+k^2m''(k))\sin z+O(\xi^3+a^2)
\intertext{and}
\mathcal{L}_{\xi,a} \phi_2 =&-i\xi k m'(k)\sin z \\
&-i\xi a\Big( m(2k)+2km'(2k)-\frac{m(2k)(m(k)-m(2k)-2km'(2k))}{m(k)-m(2k)}\Big)\sin 2z\notag\\
&-\frac12\xi^2(2km'(k)+k^2m''(k))\cos z+O(\xi^3+a^2), \notag\\
\mathcal{L}_{\xi,a} \phi_3=&2a m(k) \sin z+i\xi (m(k)-1)-2i\xi a(m(k)+km'(k))\cos z+O(\xi^3+a^2) 
\end{align*}
as $\xi, a\to 0$. Recall \eqref{def:prod1}. 
Using the above and \eqref{def:p1}-\eqref{def:p3}, we make another lengthy but straightforward calculation to find that
\begin{align*}
\langle \mathcal{L}_{\xi,a}\phi_1,\phi_1\rangle &=\langle  \mathcal{L}_{\xi,a}\phi_2,\phi_2 \rangle
= -\frac12i\xi k m'(k)+O(\xi^3+a^2),\\
\langle \mathcal{L}_{\xi,a}\phi_1,\phi_2\rangle &=-\langle \mathcal{L}_{\xi,a}\phi_2,\phi_1\rangle
= \frac14\xi^2(2km'(k)+k^2m''(k))+O(\xi^3+a^2),\\
\langle \mathcal{L}_{\xi,a}\phi_1,\phi_3\rangle &= 
-i\xi a\Big(1+\frac12\frac{m(2k)(m(k)-1)}{m(k)-m(2k)}\Big)+O(\xi^3+a^2)
\intertext{and}
\langle \mathcal{L}_{\xi,a}\phi_2,\phi_3\rangle &=0+O(\xi^3+a^2),\\
\langle \mathcal{L}_{\xi,a}\phi_3,\phi_1\rangle &=-i\xi a\Big( m(k)+km'(k)+\frac12\frac{m(2k)(m(k)-1)}{m(k)-m(2k)}\Big)+O(\xi^3+a^2),\\
\langle \mathcal{L}_{\xi,a}\phi_3,\phi_2\rangle &=a m(k)+O(\xi^3+a^2),\\
\langle \mathcal{L}_{\xi,a}\phi_3,\phi_3\rangle &=i\xi( m(k)-1)+O(\xi^3+a^2)
\end{align*}
as $\xi,a\rightarrow 0$. Moreover we use \eqref{def:p1}-\eqref{def:p3} to compute that
\begin{align*}
\langle \phi_1,\phi_1\rangle &= \langle \phi_2,\phi_2 \rangle=\frac{1}{2}+O(\xi^3+a^2),\\
\langle \phi_1,\phi_2\rangle &=0+O(\xi^3+a^2),\\
\langle \phi_1,\phi_3\rangle &=-a\frac12\frac{m(2k)}{m(k)-m(2k)}+O(\xi^3+a^2), \\
\langle \phi_2,\phi_2\rangle &=0+O(\xi^3+a^2),  \\
\langle \phi_3,\phi_3\rangle &=1+O(\xi^3+a^2)
\end{align*}
as $\xi,a\rightarrow 0$. To summarize, \eqref{def:BI} becomes
\begin{align}\label{E:B-bbm}
\mathbf{B}_{\xi,a} &=a m(k)\begin{pmatrix} 0&0&0\\0&0&0\\0&1&0\end{pmatrix}\\
&+i\xi \begin{pmatrix} -km'(k)&0&0\\ 0&-km'(k)&0\\ 0&0&m(k)-1\end{pmatrix}\label{E:B-bbm} \notag\\
&-i\xi a\begin{pmatrix} 0&0&{\displaystyle 2+\frac{m(2k)(m(k)-1)}{m(k)-m(2k)}}\\ 0&0&0\\ 
{\displaystyle m(k)+km'(k)+\frac12\frac{m(2k)(m(k)-1)}{m(k)-m(2k)}}&0&0\end{pmatrix}\notag\\
&+\xi^2(km'(k)+\tfrac12k^2m''(k))\begin{pmatrix}0&1&0\\ -1&0&0\\ 0&0&0 \end{pmatrix}+O(\xi^3+a^2)\notag
\end{align}
and
\begin{equation}
\mathbf{I}_{a}=\mathbf{I}-a\frac{m(2k)}{2(m(k)-m(2k))}\begin{pmatrix} 0&0&2\\0&0&0\\1&0&0\end{pmatrix}+O(a^2)\label{E:I-bbm}
\end{equation}
as $\xi,a\to 0$. Here $\mathbf{I}$ denotes the $3\times3$ identity matrix.

\

We turn the attention to the roots of the characteristic polynomial
\begin{align*}
\det(\mathbf{B}_{\xi,a}-\lambda\mathbf{I}_{a})=D_3(\xi,a)\lambda^3+iD_2(\xi,a)\lambda^2+D_1(\xi,a)\lambda+iD_0(\xi,a)
\end{align*}
for $\xi$ and $|a|$ sufficiently small, 
where $\mathbf{B}_{\xi,a}$ and $\mathbf{I}_{a}$ are in \eqref{E:B-bbm} and \eqref{E:I-bbm}.
Details are found in \cite[Section~3.3]{HJ2}. Hence we merely hit the main points.

%Note that $D_j$'s, $j=0,1,2,3$, are real and smoothly depend on $\xi$ and $a$.
%They are even in $a$, and $D_3,D_1$ are even in $\xi$ and $D_2,D_0$ are odd. 
Observe that $D_j=\xi^{3-j}d_j$, $j=0,1,2,3$, for some real $d_j$'s.
%The spectrum of $\mathcal{L}_{\xi,a}$ is symmetric with respect to imaginary axis which implies (1). The spectrum of $\mathcal{L}_{\xi,a}$ is same as spectrum of $\mathcal{L}_{\xi,-a}$ gives (2). We get (3) from the fact that $P(\lambda;\xi,0)$ has zero as a root with multiplicity three. Then using (1), (3) and 
We may therefore write that
\[
\det(\mathbf{B}_{\xi,a}-(-i\xi)\lambda\mathbf{I}_{a})= 
i\xi^3 (d_3(\xi,a)\lambda^3-d_2(\xi,a)\lambda^2-d_1(\xi,a)\lambda+d_0(\xi,a)).
\]
The underlying, periodic traveling wave of \eqref{E:bbm} is modulationally unstable 
if $\det(\mathbf{B}_{\xi,a}-(-i\xi)\lambda\mathbf{I}_{a})$ admits a pair of complex roots, or equivalently,
the discriminant of the cubic polynomial
\[
\text{disc}_{\text{BBM}}(\xi,a):=18d_3d_2d_1d_0+d_2^2d_1^2+4d_2^3d_0+4d_3d_1^3-27d_3^2d_0^2<0
\]
for $\xi$ and $|a|$ sufficiently small, while it is modulationally stable if $\text{disc}_{\text{BBM}}(\xi,a)>0$. 
Observe that $\text{disc}_{\text{BBM}}(\xi,a)$ is even in $\xi$ and $a$, whereby we may write that 
\[
\text{disc}_{\text{BBM}}(\xi,a):=\text{disc}_{\text{BBM}}(k;\xi,0)+\text{ind}_{\text{BBM}}(k)a^2+O(a^2(a^2+\xi^2))
\]
as $\xi,a\to 0$. 
It is readily seen from \eqref{E:B-bbm} and \eqref{E:I-bbm} that $\text{disc}_{\text{BBM}}(k;\xi,0)>0$ for all $k>0$.
Specifically, a Mathematica calculation reveals that
\[
\text{disc}_{\text{BBM}}(k;\xi,0)=\frac{1}{16}\xi^2( ki_1(k)(ki_1(k)\xi-4i_2^-(k))(ki_1(k)\xi+4i_2^-(k)))^2.
\]
Therefore the sign of $\text{ind}_{\text{BBM}}(k)$ determines modulational stability and instability. 
As a matter of fact, if $\text{ind}_{\text{BBM}}(k)<0$ then $\text{disc}_{\text{BBM}}(k;\xi,a)<0$ 
for $\xi$ sufficiently small, depending on $a$ sufficiently small but fixed, implying modulational instability,
whereas if $\text{ind}_{\text{BBM}}(k)>0$ then $\text{disc}_{\text{BBM}}(k;\xi,a)>0$ 
for all $k$ and $\xi, |a|$ sufficiently small, implying modulational stability.
Recalling \eqref{E:B-bbm} and \eqref{E:I-bbm}, 
a Mathematica calculation then reveals that 
the sign of $\text{ind}_{\text{BBM}}(k)$ agrees with that of \eqref{def:ind-bbm}. 
This completes the proof of Theorem~\ref{thm:bbm}.

%%%%%%%%%%%%%%%%%%%%%%%%%%%%%%%%%%%%%%%%%%%%%%%%%%
%%%%%%%%%%%%%%%%%%%%%%%%%%%%%%%%%%%%%%%%%%%%%%%%%%
%%%%%%%%%%%%%%%%%%%%%%%%%%%%%%%%%%%%%%%%%%%%%%%%%%
\section{Equations of regularized Boussinesq type}\label{sec:bnesq}
%%%%%%%%%%%%%%%%%%%%%%%%%%%%%%%%%%%%%%%%%%%%%%%%%%
%%%%%%%%%%%%%%%%%%%%%%%%%%%%%%%%%%%%%%%%%%%%%%%%%%
%%%%%%%%%%%%%%%%%%%%%%%%%%%%%%%%%%%%%%%%%%%%%%%%%%

We discuss how to extend the arguments in \cite{HJ2} and the previous section to prove Theorem~\ref{thm:bnesq}.
It is convenient to write \eqref{E:bnesq}, equivalently, in the Hamiltonian form\footnote{
The present development does not rely on the Hamiltonian structure. 
But \eqref{E:main} puts the associated spectral problem in the traditional form,
where the spectral parameter dapperly linearly.}
\begin{equation}\label{E:main}
\begin{cases}
u_t=\mathcal{M}^2 q_x, \\
q_t=(u+u^2)_x.\\
\end{cases}
\end{equation}
Throughout the section, $\u=(u,q)$. 

%%%%%%%%%%%%%%%%%%%%%%%%%%%%%%%%%%%%%%%%%%%%%%%%%%
%%%%%%%%%%%%%%%%%%%%%%%%%%%%%%%%%%%%%%%%%%%%%%%%%%
%%%%%%%%%%%%%%%%%%%%%%%%%%%%%%%%%%%%%%%%%%%%%%%%%%
\subsection{Remark on periodic traveling waves} 
%%%%%%%%%%%%%%%%%%%%%%%%%%%%%%%%%%%%%%%%%%%%%%%%%%
%%%%%%%%%%%%%%%%%%%%%%%%%%%%%%%%%%%%%%%%%%%%%%%%%%
%%%%%%%%%%%%%%%%%%%%%%%%%%%%%%%%%%%%%%%%%%%%%%%%%%

We seek a $2\pi/k$-periodic traveling wave of \eqref{E:bnesq}, and hence \eqref{E:main}. 
That is, $u(x,t)=u(k(x-ct))$, where $c>0$ is the wave speed, $k>0$ is the wave number,
and, abusing notation, $u$ is a $2\pi$-periodic function of $z:=kx$, satisfying by quadrature that
\begin{equation}\label{E:quad-bnesq}
\mathcal{M}_k^2(u+u^2)-c^2u=(c^2-1)^2b
\end{equation}
for some $b\in\mathbb{R}$; $\mathcal{M}_k$ is in \eqref{def:Mk}.
Equivalently, $(u,q)$ is a $2\pi$-periodic vector-valued function of $z=kx$, satisfying that 
\begin{equation}\label{E:quad-main}
\begin{cases}
{\displaystyle cu+\mathcal{M}_k^2q+\frac{(c^2-1)^2}{c}b_1=0,}\\
cq+u+u^2+(c^2-1)^2b_2=0
\end{cases}
\end{equation}
for some $b_1,b_2\in \mathbb{R}$. Note that $b=b_1-b_2$. 

Observe that \eqref{E:quad-bnesq} is identical to \eqref{E:quad-bbm} 
after replacing $\M_k$ by $\M_k^2$ and $c$ by $c^2$.
The existence and regularity results in the previous section therefore hold for \eqref{E:quad-bnesq}, 
and hence \eqref{E:quad-main}. Below we summarize the conclusion.

%%%%%%%%%%%%%%%%%%%%%%%%%%%%%%%%%%%%%%%%%%%%%%%%%%
%%%%%%%%%%%%%%%%%%%%%%%%%%%%%%%%%%%%%%%%%%%%%%%%%%
%%%%%%%%%%%%%%%%%%%%%%%%%%%%%%%%%%%%%%%%%%%%%%%%%%
\begin{lemma}[Existence]\label{lem:exist-bnesq} 
Under Assumption~\ref{A:m}, for arbitrary $k>0$ and $|b_1|,|b_2|$ sufficiently small, 
a one-parameter family of $2\pi/k$-periodic traveling waves of \eqref{E:bnesq}, and hence \eqref{E:main}, 
exists and, abusing notation,
\[ 
\u(x,t)=\u(a,b_1,b_2)(k(x-c(k,a,b_1,b_2)t))=:\u(k,a,b_1,b_2)(z) 
\]
for $|a|$ sufficiently small; $\u=(u,q)$ and $c$ depend analytically on $k$, $a$, $b_1$, $b_2$, 
and $u$, $q$ are smooth, even and $2\pi$-periodic in $z$, and $c$ is even in $a$. Furthermore,
\begin{equation}\label{def:u}
\left\{\begin{split}
u(k,a,b_1,b_2)(z)=&u_0(k,b_1,b_2)+a m(k)\cos z +a(b_1-b_2)m(k)(m^2(k)-1)\cos z \\
&+a^2 (U_0+U_2 \cos 2z)+O(a(a^2+(b_1+b_2)^2)), \\
q(k,a,b_1,b_2)(z)=&q_0(k,b_1,b_2)-a \cos z-2a(b_1-b_2)(m^2(k)-1)\cos z \\
&-a^2\Big(m(k)U_0+\frac{m(k)}{m^2(2k)}U_2 \cos 2z\Big)+O(a(a^2+(b_1+b_2)^2)) 
\end{split}\right.
\end{equation}
and
\begin{equation}
c(k,a,b_1,b_2)=c_0(k,b_1,b_2)+a^2m(k)\Big(U_0+\frac12U_2\Big)+O(a(a^2+(b_1+b_2)^2))\label{def:c}
\end{equation}
as $a,b_1,b_2 \to 0$, where
\begin{equation}\label{E:bnesq-u0}
\left\{\begin{split}
u_0(k,b_1,b_2)&=(b_1-b_2)(m^2(k)-1)+O((b_1+b_2)^2),\\
q_0(k,b_1,b_2)&=\Big(-b_1\frac{1}{m(k)}+b_2m(k)\Big)(m^2(k)-1)+O((b_1+b_2)^2), 
\end{split}\right.
\end{equation}
\begin{equation}
c_0(k,b_1,b_2)= m(k)+(b_1-b_2)m(k)(m^2(k)-1)+O((b_1+b_2)^2) \label{E:bnesq-c0},
\end{equation}
and
\begin{equation}\label{def:U02}
U_0=\frac12\frac{m^2(k)}{m^2(k)-1} \quad \text{and} \quad 
U_2=\frac12\frac{m^2(k)m^2(2k)}{m^2(k)-m^2(2k)}.
\end{equation}
\end{lemma}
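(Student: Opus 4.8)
The plan is to reduce the system \eqref{E:quad-main} to the scalar equation \eqref{E:quad-bnesq} and then invoke Lemma~\ref{lem:existence}, since, as already observed, \eqref{E:quad-bnesq} is \eqref{E:quad-bbm} with $\M_k$ replaced by $\M_k^2$ and $c$ by $c^2$. First I would eliminate $q$: solving the second equation of \eqref{E:quad-main} for $q=-\tfrac1c(u+u^2+(c^2-1)^2b_2)$ and substituting into the first, using $m(0)=1$ so that $\M_k^2$ fixes constants, one recovers \eqref{E:quad-bnesq} with $b=b_1-b_2$. I would then check that $m^2$ inherits the hypotheses of Assumption~\ref{A:m} needed to run the Lyapunov--Schmidt reduction of Section~\ref{sec:bbm} verbatim: it is $C^2$, even, equals $1$ at $k=0$, obeys the two-sided power bound (with exponent $2\alpha$), and---crucially for one-dimensionality of the kernel---satisfies the non-resonance $m^2(k)\neq m^2(nk)$ for $n=2,3,\dots$, which I would deduce from (M4) (noting that $m^2(k)=m^2(nk)$ would force $m(k)=\pm m(nk)$). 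Consequently Lemma~\ref{lem:existence}, read with $m\mapsto m^2$ and $c\mapsto c^2$, produces an even, smooth, analytic one-parameter family solving \eqref{E:quad-bnesq}, and regularity bootstraps through $\M_k^2$ exactly as before.

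Next I would translate the expansions \eqref{E:u(k,a,b)} and \eqref{E:c(k,a,b)}. The one genuinely new bookkeeping point is the amplitude normalization: the bifurcation parameter in Lemma~\ref{lem:existence} is the coefficient of $\cos z$ in $u$, whereas here $a$ is fixed so that this coefficient is $am(k)$ (equivalently $q$ carries $-a\cos z$ at leading order, which is natural for the Hamiltonian form \eqref{E:main}). Writing the BBM amplitude as $\tilde a=am(k)$ and substituting $m\mapsto m^2$, $c\mapsto c^2$ into \eqref{E:u(k,a,b)} reproduces the $u$-profile in \eqref{def:u}, with the mean-flow and second-harmonic coefficients collapsing precisely to $U_0,U_2$ of \eqref{def:U02}; the factors of $m^2(k)$ coming from $\tilde a^2=a^2m^2(k)$ are exactly what distinguish these from a naive substitution. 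For the speed I would take the branch $c=+\sqrt{c^2}$ near $m(k)$ and Taylor-expand $m(k)\sqrt{1+\cdots}$, which yields \eqref{E:bnesq-c0} at order $b$ and the $a^2$-coefficient $m(k)(U_0+\tfrac12U_2)$ in \eqref{def:c}.

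Finally I would recover $q$ from $u$ and $c$ via $q=-\tfrac1c(u+u^2+(c^2-1)^2b_2)$; inserting the expansions of $u$ and $c$ and collecting orders gives the $q$-profile in \eqref{def:u}, and the first equation of \eqref{E:quad-main} (equivalently $\M_k^{-2}$ applied to the $u$-relation) furnishes a consistency check. This step also explains the asymmetry in the statement: whereas $u_0$ depends only on $b=b_1-b_2$ because it solves the scalar problem, the reconstruction of $q$ reintroduces $b_2$ explicitly, so $q_0$ in \eqref{E:bnesq-u0} depends on $b_1$ and $b_2$ separately. Analyticity in $(k,a,b_1,b_2)$, evenness of $u,q$ in $z$, and evenness of $c$ in $a$ descend from the analytic implicit function theorem and the reflection symmetry exactly as in Lemma~\ref{lem:existence}.

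The main obstacle I anticipate is organizational rather than analytic: getting the reparametrization $\tilde a=am(k)$ right, since omitting it produces the wrong powers of $m(k)$ in the $a^2$-terms of both $u$ and $c$, and then propagating the branch $c\mapsto\sqrt{c^2}$ and the $q$-reconstruction so that the $b_1,b_2$-dependence lands as stated. The remaining coefficient computations are routine and can be confirmed symbolically.
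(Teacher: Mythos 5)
Your proposal is correct in substance, and for the existence part it is exactly the paper's argument: the paper likewise eliminates $q$, observes that \eqref{E:quad-main} collapses to \eqref{E:quad-bnesq} with $b=b_1-b_2$, i.e.\ \eqref{E:quad-bbm} with $\M_k\mapsto\M_k^2$ and $c\mapsto c^2$, and imports the existence and regularity theory of Section~\ref{sec:bbm}. Where you genuinely diverge is in how \eqref{def:u}--\eqref{def:c} are obtained. The paper does not translate \eqref{E:u(k,a,b)}--\eqref{E:c(k,a,b)}; it works directly on the system: it computes the constant solutions of \eqref{E:quad-main} to linear order in $b_1,b_2$, identifies the bifurcation condition $c_0^2=m^2(k)(1+2u_0)$ together with the kernel vector $(u_1,q_1)=(c_0,-1-2u_0)$ of \eqref{E:uq1}, and then substitutes a power series in $a$ (at $b_1=b_2=0$) into \eqref{E:quad-main}, reading off $U_0,U_2$ and their $q$-counterparts at order $a^2$ and the speed correction at order $a^3$. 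Your route --- pulling back Lemma~\ref{lem:existence} with a renormalized amplitude, taking the branch $c=+\sqrt{c^2}$, and reconstructing $q=-\tfrac1c\bigl(u+u^2+(c^2-1)^2b_2\bigr)$ --- is legitimate and does reproduce the stated coefficients: the identities $U_0+\tfrac12m^2(k)=m^2(k)U_0$ and $U_2+\tfrac12m^2(k)=\tfrac{m^2(k)}{m^2(2k)}U_2$ are precisely what collapse the reconstruction onto $-m(k)U_0$ and $-\tfrac{m(k)}{m^2(2k)}U_2$, and the square-root expansion gives $m(k)\bigl(U_0+\tfrac12U_2\bigr)$ in \eqref{def:c}. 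Your version buys economy (no order-by-order work on the system); the paper's buys the vector-valued data --- in particular the kernel direction $(c_0,-1-2u_0)$ --- explicitly, which is what gets differentiated in $a$, $b_1$, $b_2$ later.

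Two repairs are needed, one of which is substantive. The renormalization $\tilde a=am(k)$ is only the $b=0$ truncation of the correct one, $\tilde a=ac_0(k,b_1,b_2)$: the lemma's parameter $a$ runs along the kernel vector, so the $\cos z$ coefficient of $u$ is $ac_0=am(k)+a(b_1-b_2)m(k)(m^2(k)-1)+O(ab^2)$, and similarly $aq_1$ for $q$. With $\tilde a=am(k)$ alone you cannot produce the explicit cross terms $a(b_1-b_2)m(k)(m^2(k)-1)\cos z$ and $-2a(b_1-b_2)(m^2(k)-1)\cos z$ in \eqref{def:u}: they are invisible in the statement of Lemma~\ref{lem:existence}, being absorbed into its error $O(a(a^2+b))$, which is coarser than the error $O(a(a^2+(b_1+b_2)^2))$ claimed here. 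These terms are not cosmetic --- they feed into $\partial_{b_1}\u$, $\partial_{b_2}\u$ and hence into the basis \eqref{E:eigen-a} of Section~\ref{sec:bnesq-spec} --- so you must either track the $b$-dependence of the bifurcation point through the Lyapunov--Schmidt reduction or compute the system's kernel vector as the paper does. Second, a caveat you share with the paper: (M4) gives $m(k)\neq m(nk)$, whereas the scalar problem for $\M_k^2$ needs $m^2(k)\neq m^2(nk)$, and the case $m(k)=-m(nk)$ is not literally excluded by Assumption~\ref{A:m}; this is harmless when $m>0$, as in all the applications, but your parenthetical ``deduce from (M4)'' is not a complete deduction.
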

%%%%%%%%%%%%%%%%%%%%%%%%%%%%%%%%%%%%%%%%%%%%%%%%%%
%%%%%%%%%%%%%%%%%%%%%%%%%%%%%%%%%%%%%%%%%%%%%%%%%%
%%%%%%%%%%%%%%%%%%%%%%%%%%%%%%%%%%%%%%%%%%%%%%%%%%

It remains to show \eqref{def:u}-\eqref{def:c} and \eqref{E:bnesq-u0}-\eqref{E:bnesq-c0}.
For an arbitrary $k>0$, a straightforward calculation reveals that
\[
\begin{cases}
u_0(k,c,b_1,b_2)=(b_1-b_2)(c^2-1)+O((b_1+b_2)^2),\\
q_0(k,c,b_1,b_2)={\displaystyle -b_1\frac{c^2-1}{c}+b_2c(c^2-1)+O((b_1+b_2)^2)}
\end{cases}
\]
form a constant solution of \eqref{E:quad-main} for all $c>0$ and $|b_1|,|b_2|$ sufficiently small. 
Thanks to (M4) of Assumption~\ref{A:m}, it then follows from bifurcation theory that 
a family of non-constant, even and $H^1_{2\pi}\times H^1_{2\pi}$-solutions, 
and hence smooth solutions, of \eqref{E:quad-main} exists, provided that
\[
\begin{pmatrix} c & \M_k^2\\ 1+2u_0  & c \end{pmatrix}
\begin{pmatrix} u_1\\q_1\end{pmatrix}\cos z=\mathbf{0}
\]
for some nontrivial $(u_1,q_1)$. Therefore 
\[
c_0^2=m^2(k)(1+2u_0).
\] 
One may therefore deduce \eqref{E:bnesq-u0}-\eqref{E:bnesq-c0}. Furthermore
\begin{equation}\label{E:uq1}
\begin{pmatrix}u_1\\q_1\end{pmatrix}=\begin{pmatrix}c_0\\-1-2u_0\end{pmatrix}
=\begin{pmatrix}m(k)+(b_1-b_2)m(k)(m^2(k)-1)\\ -1-2(b_1-b_2)(m^2(k)-1)\end{pmatrix}
\end{equation}
up to multiplication by a constant. 

Let $k>0$ be fixed and suppressed, to simplify the exposition. We assume that $b_1=b_2=0$. 
As a matter of fact, it suffices to find $u$ and $q$ up to the linear order in $b_1$ and $b_2$.
Since $u$, $q$ and $c$ depend analytically on $a$ for $|a|$ sufficiently small 
and since $c$ is even in $a$, we write that
\[
\begin{cases}
u(k,a,b_1,b_2)(z)=u_0 (k,b_1,b_2)+u_1(k,b_1,b_2)\cos z+a^2u_2(z)+a^3u_3(z)+\cdots, \\
q(k,a,b_1,b_2)(z)=q_0(k,b_1,b_2)+q_1(k,b_1,b_2)\cos z+a^2q_2(z)+a^3q_3(z)+\cdots
\end{cases}
\]
and
\[
c(k,a,b_1,b_2)=c_0(k,b_1,b_2)+a^2c_2+\cdots
\]
as $a \to 0$, where $u_1$ and $q_1$ are in \eqref{E:uq1},
$u_2$, $u_3$, $q_2$, $q_3,\dots$ are even and $2\pi$-periodic in $z$. 
Substituting these into \eqref{E:quad-main}, at the order of $a^2$, we gather that
\[
\begin{cases}
m(k)u_2+\mathcal{M}_k^2q_2=0,\\
m(k)q_2+u_2+m^2(k)\cos^2 z=0.\\
\end{cases}
\]
A straightforward calculation then reveals that $u_2(z)=U_0+U_2\cos 2z$ and $q_2(z)=Q_0+Q_2\cos 2z$, 
where $U_0$ and $U_2$ are in \eqref{def:U02}, and 
\[ 
\quad Q_0=-m(k)U_0\quad\text{and}\quad Q_2=-\frac{m(k)}{m^2(2k)}U_2.
\]
Continuing, at the order of $a^3$, 
\[
\begin{cases}
m(k)u_3+m(k)c_2\cos z+\mathcal{M}_k^2q_3=0,\\
m(k)q_3-c_2\cos z+u_3+2m(k)u_2\cos z=0,
\end{cases}
\]
whence $c_2=m(k)\Big(U_0+\frac12U_2\Big)$. This completes the proof.

%%%%%%%%%%%%%%%%%%%%%%%%%%%%%%%%%%%%%%%%%%%%%%%%%%
%%%%%%%%%%%%%%%%%%%%%%%%%%%%%%%%%%%%%%%%%%%%%%%%%%
%%%%%%%%%%%%%%%%%%%%%%%%%%%%%%%%%%%%%%%%%%%%%%%%%%
\subsection{Modulational instability}\label{sec:MI-bnesq}
%%%%%%%%%%%%%%%%%%%%%%%%%%%%%%%%%%%%%%%%%%%%%%%%%%
%%%%%%%%%%%%%%%%%%%%%%%%%%%%%%%%%%%%%%%%%%%%%%%%%%
%%%%%%%%%%%%%%%%%%%%%%%%%%%%%%%%%%%%%%%%%%%%%%%%%%

In the remainder of the section we assume that $b_1=b_2=0$; 
that means, loosely speaking, the wave height is small. 
A small amplitude, but not necessarily small height, periodic traveling wave of \eqref{E:bnesq}, 
and hence \eqref{E:main}, may be studied in like manner. But expressions become quite complicated. 
Hence we do not pursue here.

Let $\u=\u(k,a,0,0)$ and $c=c(k,a,0,0)$, for $k>0$ and $|a|$ sufficiently small, 
form a sufficiently small, $2\pi/k$-periodic traveling wave of \eqref{E:main}, 
whose existence follows from Lemma~\ref{lem:exist-bnesq}. %We are interested in its stability and instability.

\

Linearizing \eqref{E:main} about $\u$ in the coordinate frame moving at the speed $c$, we arrive at that
\[
\partial_t \v = k\partial_z \begin{pmatrix}c &\mathcal{M}_k^2\\ 1+2u & c  \end{pmatrix}\v.
\]
Seeking a solution of the form $\v(z,t)=e^{\lambda k t}\v(z)$, 
where $\lambda \in \mathbb{C}$ and $\v\in L^2_{2\pi} \times L^2_{2 \pi}$, moreover, we arrive at that
\begin{equation}\label{E:bnesq-eigen}
\lambda\v = \partial_z \begin{pmatrix}c &\mathcal{M}_k^2\\ 1+2u & c  \end{pmatrix}\v=: \mathcal{L}(k,a)\v.
\end{equation}
We say that $\u$ is spectrally unstable if the $L^2(\mathbb{R})\times L^2(\mathbb{R})$-spectrum of $\L$ 
intersects the open, right half plane of $\mathbb{C}$ and it is stable otherwise. 
Note that $\v$ need not have the same period as $\u$. 
Since \eqref{E:bnesq-eigen} remains invariant under 
\[
\v\mapsto \bar{\v} \quad\text{and}\quad\lambda\mapsto \bar{\lambda}
\] 
and under
\[
z\mapsto -z\quad\text{and}\quad \lambda\mapsto -\lambda,
\]
the spectrum of $\L$ is symmetric with respect to the reflections about the real and imaginary axes. 
Therefore $\u$ is spectrally unstable if and only if 
the $L^2(\mathbb{R})\times L^2(\mathbb{R})$-spectrum of $\L$ is not contained in the imaginary axis.

We repeat the argument in the previous section (see also \cite{HJ2, BHJ} and references therein) to learn that
$\lambda\in\mathbb{C}$ belongs to the $L^2(\mathbb{R})\times L^2(\mathbb{R})$-spectrum of $\mathcal{L}$
if and only if 
\begin{equation}\label{E:bnesq-bloch}
\lambda\P=e^{-i\xi z}\mathcal{L}(k,a)e^{i\xi z}\P=:\mathcal{L}_\xi(k,a)\P
\end{equation}
for some $\xi\in[-1/2,1/2)$ and $\P \in L^2_{2\pi}\times L^2_{2\pi}$. 
For each $\xi\in[-1/2,1/2)$, the $L^2_{2\pi}\times L^2_{2\pi}$-spectrum of $\mathcal{L}_\xi$ 
comprises entirely of discrete eigenvalues of finite multiplicities. Moreover
\[
\text{spec}_{L^2(\mathbb{R})\times L^2(\mathbb{R})}(\mathcal{L}(k,a,b))=
\bigcup_{\xi\in[-1/2,1/2)}\text{spec}_{L^2_{2\pi}\times L^2_{2\pi}}(\mathcal{L}_\xi(k,a,b)).
\]
Since 
\[
\text{spec}_{L^2_{2\pi}\times L^2_{2\pi}}(\L_\xi)=\overline{\text{spec}_{L^2_{2\pi}\times L^2_{2\pi}}(\L_{-\xi})},
\]
it suffices to take $\xi\in [0,1/2]$.

\subsubsection*{Notation} 
In what follows, $k>0$ is fixed and suppressed to simplify the exposition, unless specified otherwise. Let 
\[
\L_{\xi,a}=\L_\xi(k,a).
\]

%%%%%%%%%%%%%%%%%%%%%%%%%%%%%%%%%%%%%%%%%%%%%%%%%%
%%%%%%%%%%%%%%%%%%%%%%%%%%%%%%%%%%%%%%%%%%%%%%%%%%
%%%%%%%%%%%%%%%%%%%%%%%%%%%%%%%%%%%%%%%%%%%%%%%%%%
\subsection{Spectra of $\L_{\xi,a}$'s}\label{sec:bnesq-spec}
%%%%%%%%%%%%%%%%%%%%%%%%%%%%%%%%%%%%%%%%%%%%%%%%%%
%%%%%%%%%%%%%%%%%%%%%%%%%%%%%%%%%%%%%%%%%%%%%%%%%%
%%%%%%%%%%%%%%%%%%%%%%%%%%%%%%%%%%%%%%%%%%%%%%%%%%
We proceed as in \cite{HJ2}, or the previous section, 
to study the $L^2_{2\pi}\times L^2_{2\pi}$-spectra of $\mathcal{L}_{\xi,a}$ near the origin for $\xi, |a|$ sufficiently small.
Recall that $\xi=0$ corresponds to the same period perturbations as the underlying wave
and $\xi$ small physically amounts to long wavelength perturbations or slow modulations of the underlying wave.

\

In the case of $a=0$, namely the zero solution, a straightforward calculation reveals that 
\begin{equation}\label{E:a=0bnesq}
\mathcal{L}_{\xi,0}\e_{n,\xi}^{\pm}=i \omega_{n,\xi}^{\pm}\e_{n,\xi}^{\pm}
\end{equation}
where 
\begin{equation}\label{def:w&e}
\omega_{n,\xi}^{\pm}=(\xi+n)(m(k)\pm m(k(\xi+n)))\quad\text{and}\quad
\e_{n,\xi}^{\pm}(z)=\begin{pmatrix} \pm m(k(\xi+n))\\ 1\end{pmatrix}e^{inz}.
\end{equation}
In particular, the zero solutions of \eqref{E:bnesq}, and hence \eqref{E:main}, is spectrally stable to square integrable perturbations.
%Note in passing that \eqref{E:a=0bnesq} and \eqref{def:w&e} imply that the zero solution of \eqref{E:bnesq}, and hence \eqref{E:main}, is spectrally stable.
Observe that
\[ 
\omega_{0,0}^{+}=\omega_{0,0}^{-}=\omega_{1,0}^{-}=\omega_{-1,0}^{-}=0,
\]
and $\omega_{n,0}^\pm\neq 0$ otherwise. 
As a matter of fact, zero is an $L^2_{2\pi}\times L^2_{2\pi}$-eigenvalue of $\mathcal{L}_{0,0}$ with algebraic and geometric multiplicity four, and 
\begin{equation}\label{E:eigen00}
\begin{split}
&\frac{\e_{1,0}^{-}+\e_{-1,0}^{-}}{2}(z)=\begin{pmatrix} -m(k)\\ 1\end{pmatrix}\cos z, \\
&\frac{\e_{1,0}^{-}-\e_{-1,0}^{-}}{2i}(z)=\begin{pmatrix} -m(k)\\ 1\end{pmatrix}\sin z,\\
&\frac{\e_{0,0}^{+}+\e_{0,0}^{-}}{2}(z)=\begin{pmatrix} 0\\ 1\end{pmatrix},\\
&\frac{\e_{0,0}^{+}-\e_{0,0}^{-}}{2}(z)=\begin{pmatrix} 1\\ 0\end{pmatrix}
\end{split}
\end{equation}
form a (real-valued) orthogonal basis of the corresponding eigenspace. 
For $\xi$ small (and $a=0$), furthermore, 
\begin{equation}\label{E:eigen-xi}
\begin{split}
\P_{1,\xi}(z):=&\frac{\sqrt{1+m^2(k)}}{2}\left(\frac{\e_{1,\xi}^-}{\langle\e_{1,\xi}^-, \e_{1,\xi}^-\rangle^{1/2}}
+\frac{\e_{-1,\xi}^-}{\langle\e_{-1,\xi}^-, \e_{-1,\xi}^-\rangle^{1/2}}\right)(z) \\
=&\begin{pmatrix}-m(k)\\ 1 \end{pmatrix}\cos z 
-i\xi \frac{km'(k)}{1+m^2(k)}\begin{pmatrix} 1\\ m(k)\end{pmatrix}\sin z+\xi^2 \boldsymbol{\alpha}\cos z+O(\xi^3), 
\hspace*{-10pt}\\
\P_{2,\xi}(z):=&\frac{\sqrt{1+m^2(k)}}{2i}\left(\frac{\e_{1,\xi}^-}{\langle\e_{1,\xi}^-, \e_{1,\xi}^-\rangle^{1/2}}
-\frac{\e_{-1,\xi}^-}{\langle\e_{-1,\xi}^-, \e_{-1,\xi}^-\rangle^{1/2}}\right)\\
=&\begin{pmatrix}-m(k)\\ 1 \end{pmatrix}\sin z
+i\xi\frac{ km'(k)}{1+m^2(k)}\begin{pmatrix} 1\\ m(k)\end{pmatrix}\cos z+\xi^2 \boldsymbol{\alpha}\sin z+O(\xi^3),
\hspace*{-10pt}\\
\P_{3,\xi}(z):=&\frac{1}{\sqrt{2}}\left(\frac{\e_{0,\xi}^+}{\langle\e_{0,\xi}^+,\e_{0,\xi}^+\rangle^{1/2}}
+\frac{\e_{0,\xi}^-}{\langle\e_{0,\xi}^-,\e_{0,\xi}^-\rangle^{1/2}}\right)(z) \\
=&\begin{pmatrix}0\\ 1 \end{pmatrix}+\xi^2\frac{k^2}{12}\begin{pmatrix} 0\\1\end{pmatrix}+O(\xi^3), \\
\P_{4,\xi}(z):=&\frac{1}{\sqrt{2}}\left(\frac{\e_{0,\xi}^+}{\langle\e_{0,\xi}^+,\e_{0,\xi}^+\rangle^{1/2}}
-\frac{\e_{0,\xi}^-}{\langle\e_{0,\xi}^-,\e_{0,\xi}^-\rangle^{1/2}}\right)(z) \\
=&\begin{pmatrix}1\\ 0 \end{pmatrix}-\xi^2\frac{k^2}{12}\begin{pmatrix} 1\\0\end{pmatrix}+O(\xi^3)
\end{split}
\end{equation}
form a complex-valued, orthogonal basis of the spectral subspace associated with eigenvalues 
$\omega_{0,\xi}^+$, $\omega_{0,\xi}^-$, $\omega_{1,\xi}^-$, $\omega_{-1,\xi}^-$ of $\mathcal{L}_{\xi,0}$, where
\begin{equation}\label{def:alpha}
\boldsymbol{\alpha}=\frac12\frac{k^2}{1+m^2(k)} \begin{pmatrix}{\displaystyle \frac{3m(k)m'(k)^2}{1+m^2(k)}-m''(k)}\\ 
{\displaystyle \frac{m'(k)^2(2m^2(k)-1)}{1+m^2(k)}-m(k)m''(k)} \end{pmatrix}.
\end{equation}

Observe that functions in \eqref{def:w&e} for $\xi\neq0$ vary with small values of $\xi$ to the leading order,
since the directions of the vector-valued functions vary with $\xi$ to the leading order,
and therefore functions in \eqref{E:eigen-xi} vary to the linear order. 
This may cause change in an eigenvalue near the origin of $\mathcal{L}_{\xi,a}$ at the leading order.
Hence we must take it into account when we construct basis functions 
of the spectral subspace associated with near-zero eigenvalues.
In the case of \eqref{E:bbm}, or \eqref{E:kdv}, (see the previous section, or \cite{HJ2}), in contrast, 
functions in \eqref{E:eigen0-bbm} form a basis for $\mathcal{L}_{\xi,0}$ for $\xi=0$ and all $\xi$ small.

\

For $|a|$ small and $\xi=0$, zero is a generalized $L_{2\pi}^2\times L_{2\pi}^2$-eigenvalue 
of $\mathcal{L}_{0,a}$ with algebraic multiplicity four and geometric multiplicity three, and 
\begin{equation}\label{E:eigen-a}
\begin{aligned} 
\P_{1,a}(z):=& \frac{1}{m^2(k)-1}( (\partial_{b_1}c)\partial_a \u- (\partial_{a}c)\partial_{b_1}\u)(k,a,0,0)(z) \\
&\hspace{95pt}-a\left( 1+\frac{1}{2}\frac{m^2(2k)}{m^2(k)-m^2(2k)}\right)\begin{pmatrix}0\\1 \end{pmatrix} \\
=&\begin{pmatrix} -m(k)\\ 1\end{pmatrix}\cos z
+\frac{1}{2}a\frac{m^2(2k)m^2(k)}{m^2(k)-m^2(2k)}\begin{pmatrix}1\\0 \end{pmatrix} \\
&\hspace{68pt}+a\frac{m^2(k)}{m^2(k)-m^2(2k)}\begin{pmatrix}-m^2(2k)\\m(k)\end{pmatrix}\cos 2z
+O(a^2),\hspace*{-10pt}\\
\P_{2,a}(z):=&\frac{m(k)}{a}\partial_z \u(k,a,0,0)(z) \\
=&\begin{pmatrix} -m(k)\\ 1\end{pmatrix}\sin z+a\frac{m^2(k)}{m^2(k)-m^2(2k)}\begin{pmatrix}-m^2(2k)\\m(k) \end{pmatrix}\sin 2z+O(a^2),\hspace*{-10pt}\\
\P_{3,a}(z):=& \begin{pmatrix}0\\1 \end{pmatrix},\\
\P_{4,a}(z):=&\frac{1}{m^2(k)-1}\partial_{b_2} \u(k,a,0,0)(z)+m(k) \begin{pmatrix}0\\1 \end{pmatrix}\\
=&\begin{pmatrix}1\\0 \end{pmatrix}+a \begin{pmatrix}m(k)\\-2 \end{pmatrix}\cos z+O(a^2) 
\end{aligned}
\end{equation}
form a basis of the corresponding generalized eigenspace. Indeed,
differentiating \eqref{E:quad-main} with respect to $z$, $a$, $b_1$, $b_2$, we find that
\begin{align*}
&\mathcal{L}_{0,a}(\partial_z \u)=0,\qquad
&&\mathcal{L}_{0,a}(\partial_a\u)=(\partial_a c)(\partial_z \u),\\
&\mathcal{L}_{0,a}(\partial_{b_1}\u)=(\partial_{b_1} c)(\partial_z \u),\qquad
&&\mathcal{L}_{0,a}(\partial_{b_2}\u)=(\partial_{b_2} c)(\partial_z \u),
\end{align*}
respectively. Moreover $\mathcal{L}_{0,a} \begin{pmatrix}0\\1 \end{pmatrix}= 0$.
Therefore \eqref{E:eigen-a} follows at once; see \cite[Lemma~3.1]{HJ2} for details.
In the case of $a=0$ note that $\P_{j,a}$'s, $j=1,2,3,4,$ in \eqref{E:eigen-a} reduce to functions in \eqref{E:eigen00}.

Ultimately, let 
\begin{align}
\P_1(z)=&\begin{pmatrix}-m(k)\\ 1 \end{pmatrix}\cos z 
-i\xi \frac{km'(k)}{1+m^2(k)}\begin{pmatrix} 1\\ m(k)\end{pmatrix}\sin z \label{E:bnesq-1xia} \\
&+\frac{1}{2}a\frac{m^2(2k)m^2(k)}{m^2(k)-m^2(2k)}\begin{pmatrix}1\\0 \end{pmatrix}
+a\frac{m^2(k)}{m^2(k)-m^2(2k)}\begin{pmatrix}-m^2(2k)\\m(k) \end{pmatrix}\cos 2z\hspace*{-10pt} \notag \\
&+\xi^2 \boldsymbol{\alpha} \cos z+O(\xi^3+a^2),\notag \\
\P_2(z)=&\begin{pmatrix}-m(k)\\ 1 \end{pmatrix}\sin z 
+i\xi \frac{km'(k)}{1+m^2(k)}\begin{pmatrix} 1\\ m(k)\end{pmatrix}\cos z\label{E:bnesq-2xia}\\
&+a\frac{m^2(k)}{m^2(k)-m^2(2k)}\begin{pmatrix}-m^2(2k)\\m(k) \end{pmatrix}\sin 2z
+\xi^2 \boldsymbol{\alpha} \sin z+O(\xi^3+a^2) \notag
\intertext{and}
\P_3(z)=&\begin{pmatrix}0\\ 1 \end{pmatrix}+\frac{k^2}{12}\begin{pmatrix} 0\\1\end{pmatrix}\xi^2+O(\xi^3),\label{E:bnesq-3xia}\\
\P_4(z)=&\begin{pmatrix}1\\ 0 \end{pmatrix}+a\begin{pmatrix} m(k)\\-2\end{pmatrix}\cos z
-\frac{k^2}{12}\begin{pmatrix} 1\\0\end{pmatrix}\xi^2+O(\xi^3+a^2)\label{E:bnesq-4xia},
\end{align}
where $\boldsymbol{\alpha}$ is in \eqref{def:alpha}. 
Note that $\P_j$'s, $j=1,2,3,4$, depend continuously on $\xi$ and $a$.
In the case of $a=0$, they reduce to $\P_{j,\xi}$'s, $j=1,2,3,4,$ in \eqref{E:eigen-xi},
and in the case of $\xi=0$, they reduce to $\P_{j,a}$'s, $j=1,2,3,4,$ in \eqref{E:eigen-a}.

\

To recapitulate, in the case of $\xi$ small and $a=0$, 
$\mathcal{L}_{\xi,0}$ possesses four purely imaginary eigenvalues near the origin 
and functions in \eqref{E:bnesq-1xia}-\eqref{E:bnesq-4xia}, restricted to $a=0$, 
form an orthogonal basis of the associated spectral subspace.
In the case of $\xi=0$ and $|a|$ small, moreover, $\mathcal{L}_{0,a}$ possesses four eigenvalues at the origin
and functions in \eqref{E:bnesq-1xia}-\eqref{E:bnesq-4xia}, restricted to $\xi=0$, 
form a basis of the associated eigenspace. 

In order to study how the four eigenvalues at the origin of $\L_{0,a}$ vary with $\xi$ and $|a|$ small,
we proceed as in \cite{HJ2}, or in the previous section, and compute $4\times 4$ matrices 
\begin{equation}\label{def:bnesq-B}
\mathbf{B}_{\xi,a}=\left( \frac{\l \L_{\xi,a}\P_j, \P_k\r}{\l \P_j, \P_j\r}\right)_{j,k=1,2,3,4}
\quad\text{and}\quad
\mathbf{I}_{\xi,a}=\left( \frac{\l \P_j, \P_k\r}{\l \P_j, \P_j\r}\right)_{j,k=1,2,3,4}
\end{equation}
up to the quadratic order in $\xi$ and the linear order in $a$ as $\xi, a \to 0$,
where $\P_j$'s, $j=1,2,3,4,$ are in \eqref{E:bnesq-1xia}-\eqref{E:bnesq-4xia} 
and $\langle\,,\rangle=\langle\,,\rangle_{L^2_{2\pi}\times L^2_{2\pi}}$ is in \eqref{def:prod2}.
For $\xi$ and $|a|$ small, eigenvalues of $\L_{\xi,a}$ agree in location and multiplicity 
with the roots of $\det(\mathbf{B}_{\xi,a}-\lambda\mathbf{I}_{\xi,a})=0$; 
see \cite[Section~4.3.5]{K}, for instance, for details.
Unlike in \eqref{def:BI}, note that $\mathbf{I}_{\xi,a}$ depends on $\xi$.

We begin by computing that 
\begin{align}\label{E:L-bnesq}
\mathcal{L}_{\xi,a}
=& e^{-i\xi z}\partial_z \begin{pmatrix} m(k) & \M_k^2\\ 1 & m(k)\end{pmatrix} e^{i\xi z}+2am(k) e^{-i\xi z}\partial_z \begin{pmatrix} 0&0\\1&0\end{pmatrix}\cos z\,e^{i\xi z}+O(a^2)\notag\\
=&\mathcal{L}_{0,0}+i\xi [\mathcal{L}_{0,0},z]-\frac{\xi^2}{2}[[\mathcal{L}_{0,0},z],z]\notag\\
&+2am(k)\partial_z\begin{pmatrix} 0&0\\1&0\end{pmatrix}\cos z+2i\xi am(k)\begin{pmatrix} 0&0\\1&0\end{pmatrix}\cos z+O(\xi^3+\xi^2 a+a^2)\notag\\
=&:L+2am(k)\partial_z\begin{pmatrix} 0&0\\1&0\end{pmatrix}\cos z+2i\xi am(k)\begin{pmatrix} 0&0\\1&0\end{pmatrix}\cos z+O(\xi^3+\xi^2 a+a^2)\hspace*{-10pt}
\end{align}
as $\xi,a\to 0$. The first equality uses \eqref{E:bnesq-eigen}, \eqref{E:bnesq-bloch} and \eqref{def:u}, \eqref{def:c},
and the second equality uses a Baker-Campbell-Hausdorff expansion.
%Note that $M_1=[\mathcal{L}_{0,0},z]$ and $[[\mathcal{L}_{0,0},z],z]$ are well defined in $L^2_{2\pi}$ even though $z$ is not. 
Note that $L=\L_{\xi,0}$ to the second order for $\xi$ sufficiently small. 

Using \eqref{E:a=0bnesq} and \eqref{def:w&e}, or its Taylor expansion (see (M1) of Assumption~\ref{A:m}),
we make an explicit calculation and fine that 
%\begin{align*}
%L\begin{pmatrix} \alpha \\ \beta \end{pmatrix}e^{\pm inz}=&\pm in\begin{pmatrix} m(k)\alpha +m(kn)^2 \beta\\ \alpha+m(k)\beta\end{pmatrix}e^{\pm inz}\\
%&+i\xi \begin{pmatrix}m(k)\alpha +m(kn)(m(kn)+2knm'(kn))\beta\\ \alpha+m(k)\beta \end{pmatrix}e^{\pm inz}\\
%&\pm i\xi^2 k \beta (2m(kn)m'(kn)\\
%&\qquad\qquad+kn(m'(kn)^2+m(kn)m''(kn)))\begin{pmatrix} 1\\0\end{pmatrix}e^{\pm inz}+O(\xi^3)
%\end{align*}as $\xi\to0$. We therefore infer that 
\[
L\begin{pmatrix} \alpha \\ \beta \end{pmatrix}=i\xi\begin{pmatrix}m(k)\alpha+\beta\\ \alpha+m(k)\beta \end{pmatrix},
\]
\begin{align*}
L\begin{pmatrix} \alpha \\ \beta \end{pmatrix}\left\{ \begin{matrix} \cos z\\ \sin z\end{matrix}\right\} =& \mp (\alpha+m(k)\beta)\begin{pmatrix}m(k)\\ 1 \end{pmatrix}\left\{ \begin{matrix} \sin z\\ \cos z\end{matrix}\right\}\\
&+i\xi \begin{pmatrix}m(k)\alpha +m(k)(m(k)+2km'(k))\beta\\ \alpha+m(k)\beta \end{pmatrix}\left\{ \begin{matrix} \cos z\\ \sin z\end{matrix}\right\}\\
&\mp \xi^2 k \beta (2m(k)m'(k)+k(m'(k)^2+m(k)m''(k)))\begin{pmatrix}1\\0 \end{pmatrix}\left\{ \begin{matrix} \sin z\\ \cos z\end{matrix}\right\},
\end{align*}
\begin{align*}
L\begin{pmatrix} \alpha \\ \beta \end{pmatrix}\left\{ \begin{matrix} \cos 2z\\ \sin 2z\end{matrix}\right\} =& \mp 2\begin{pmatrix}m(k)\alpha +m^2(2k) \beta\\ \alpha+m(k)\beta \end{pmatrix}\left\{ \begin{matrix} \sin 2z\\ \cos 2z\end{matrix}\right\} \\
&+i\xi \begin{pmatrix}m(k)\alpha +m(2k)(m(2k)+4km'(2k))\beta\\ \alpha+m(k)\beta \end{pmatrix}\left\{ \begin{matrix} \cos 2z\\ \sin 2z\end{matrix}\right\} \\
&\mp 2\xi^2 k \beta (m(2k)m'(2k) 
+k(m'(2k)^2+m(2k)m''(2k)))\begin{pmatrix}1\\0 \end{pmatrix}\left\{ \begin{matrix} \sin 2z\\ \cos 2z\end{matrix}\right\}.
\end{align*}
Substituting \eqref{E:bnesq-1xia}-\eqref{E:bnesq-4xia} into \eqref{E:L-bnesq}, 
and using the above and \eqref{E:a=0bnesq}, we make a lengthy but straightforward calculation to find that
\begin{align*}
\L_{\xi,a}\P_1=&2i\xi k m(k)m'(k)\begin{pmatrix}1\\0 \end{pmatrix}\cos z
+i\xi km'(k)\begin{pmatrix} m(k)\\1\end{pmatrix}\cos z \\
&-i\xi am^2(k)\begin{pmatrix} 0\\1\end{pmatrix} (\cos 2z+1)
-2i\xi a\frac{ km(k)m'(k)}{1+m^2(k)}\begin{pmatrix}0\\1 \end{pmatrix} \cos 2z \\
&+\frac{1}{2}i\xi a\frac{m^2(k)m^2(2k)}{m^2(k)-m^2(2k)}\begin{pmatrix} m(k)\\1 \end{pmatrix}
+i\xi a\frac{m^2(k)}{m^2(k)-m^2(2k)}\begin{pmatrix} 4km(k)m(2k)m'(2k)\notag\\ m^2(k)-m^2(2k)\end{pmatrix}\cos 2z\\
&-\xi^2 k(2m(k)m'(k)+k(m'(k)^2+m(k)m''(k)))\begin{pmatrix}1\\0 \end{pmatrix} \sin z\\
&+\xi^2\frac{k m'(k)}{1+m^2(k)}\begin{pmatrix}m(k)+m^2(k)(m(k)+2km'(k))\\1+m^2(k) \end{pmatrix}\sin z\\
&-\frac12\xi^2\frac{k^2}{1+m^2(k)}(2m(k)m'(k)^2-m''(k)(1+m^2(k)))\begin{pmatrix}m(k)\\1 \end{pmatrix}\sin z\\
&+O(\xi^3+\xi^2 a+a^2),
\end{align*}
\begin{align*}
\L_{\xi,a}\P_2=&2i\xi k m(k)m'(k)\begin{pmatrix}1\\0 \end{pmatrix}\sin z
+i\xi km'(k)\begin{pmatrix} m(k)\\1\end{pmatrix}\sin z \\
&+2i\xi a\frac{ km(k)m'(k)}{1+m^2(k)}\begin{pmatrix}0\\1 \end{pmatrix} \sin 2z  \\
&+i\xi a\frac{m^2(k)}{m^2(k)-m^2(2k)}\begin{pmatrix} 4km(k)m(2k)m'(2k)\notag\\ m^2(k)-m^2(2k)\end{pmatrix}\sin 2z 
-i\xi am^2(k)\begin{pmatrix} 0\\1\end{pmatrix} \sin 2z\\
&+\xi^2 k(2m(k)m'(k)+k(m'(k)^2+m(k)m''(k)))\begin{pmatrix}1\\0 \end{pmatrix} \cos z \\
&-\xi^2\frac{ k m'(k)}{1+m^2(k)}\begin{pmatrix}m(k)+m^2(k)(m(k)+2km'(k))\\1+m^2(k) \end{pmatrix}\cos z\\
&+\xi^2\frac{ k^2}{2(1+m^2(k))}(2m(k)m'(k)^2-m''(k)(1+m^2(k)))\begin{pmatrix}m(k)\\1 \end{pmatrix}\cos z\\
&+O(\xi^3+\xi^2 a+a^2)
\end{align*}
and
\begin{align*}
\L_{\xi,a}\P_3=&i\xi \begin{pmatrix} 1\\m(k) \end{pmatrix}+O(\xi^3+\xi^2 a+a^2), \\
\L_{\xi,a}\P_4=&i\xi \begin{pmatrix} m(k)\\1 \end{pmatrix} -2am(k) \begin{pmatrix} 0\\1 \end{pmatrix} \sin z
+am(k)\begin{pmatrix} m(k)\\1\end{pmatrix} \sin z \\
&+2i\xi am(k)\begin{pmatrix} 0\\1\end{pmatrix}\cos z 
-i\xi a m(k)\begin{pmatrix}m(k)+4km'(k)\\1 \end{pmatrix}\cos z+O(\xi^3+\xi^2 a+a^2) 
\end{align*}
as $\xi, a\to 0$. Recall \eqref{def:prod2}. Using the above and \eqref{E:bnesq-1xia}-\eqref{E:bnesq-4xia},
we make another lengthy but straightforward calculation to find that 
\begin{align*}
\langle \mathcal{L}_{\xi,a}\P_1,\P_1\rangle& =\langle \mathcal{L}_{\xi,a}\phi_2,\phi_2\rangle
=-\frac{1}{2} i\xi k m'(k)(m^2(k)+1) +O(\xi^3+\xi^2 a+a^2),\\
\langle \mathcal{L}_{\xi,a}\P_1,\P_2\rangle& =-\langle \mathcal{L}_{\xi,a}\phi_2,\phi_1\rangle
=\frac{\xi^2}{2}\Big( km'(k)+\frac{k^2 m''(k)}{2}\Big)(m^2(k)+1) +O(\xi^3+\xi^2 a+a^2),\\
\langle \mathcal{L}_{\xi,a}\P_1,\P_3\rangle&=i\xi a\Big(\frac{m^2(k)m^2(2k)}{2(m^2(k)-m^2(2k))}-m^2(k) \Big) +O(\xi^3+\xi^2 a+a^2), \\
\langle \mathcal{L}_{\xi,a}\P_1,\P_4\rangle&=i\xi a\Big( \frac{m(k)^3m^2(2k)}{2(m^2(k)-m^2(2k))}+\frac{k(m^2(k)+2)m'(k)}{2}\Big)+O(\xi^3+\xi^2 a+a^2)
\intertext{and}
\langle \mathcal{L}_{\xi,a}\P_2,\P_3\rangle&=\langle \mathcal{L}_{\xi,a}\phi_2,\phi_4\rangle
=0 +O(\xi^3+\xi^2 a+a^2),\\
\langle \mathcal{L}_{\xi,a}\P_3,\P_1\rangle&=\frac{1}{2}i\xi a\frac{m^2(k)m^2(2k)}{m^2(k)-m^2(2k)}+O(\xi^3+\xi^2 a+a^2),  \\
\langle \mathcal{L}_{\xi,a}\P_3,\P_2\rangle&=  0+O(\xi^3+\xi^2 a+a^2),  \\
\langle \mathcal{L}_{\xi,a}\P_3,\P_3\rangle&=\langle \mathcal{L}_{\xi,a}\phi_4,\phi_4\rangle
= i\xi m(k)+O(\xi^3+\xi^2 a+a^2)  ,\\
\langle \mathcal{L}_{\xi,a}\P_3,\P_4\rangle&=\langle \mathcal{L}_{\xi,a}\P_4,\P_3\rangle
=  i\xi+O(\xi^3+\xi^2 a+a^2) ,\\
\langle \mathcal{L}_{\xi,a}\P_4,\P_1\rangle&=i\xi a m(k)\Big(\frac{3}{2}+\frac{m(k)^4}{2(m^2(k)-m^2(2k))}+2km(k)m'(k) \Big) +O(\xi^3+\xi^2 a+a^2),  \\
\langle \mathcal{L}_{\xi,a}\P_4,\P_2\rangle&=-\frac{a}{2}m(k)(m^2(k)+1) +O(\xi^3+\xi^2 a+a^2)   
\end{align*}
as $\xi, a\to 0$. Moreover we use \eqref{E:bnesq-1xia}-\eqref{E:bnesq-4xia} to compute that 
\begin{align*}
\langle\P_1,\P_1\rangle& =\langle \P_2,\P_2 \rangle= \frac{m^2(k)+1}{2}+O(\xi^3+\xi^2 a+a^2),\\
\langle\P_1,\P_2\rangle&=\langle \P_1,\P_3 \rangle= 0+O(\xi^3+\xi^2 a+a^2) ,\\
\langle\P_1,\P_4\rangle&=\frac{a}{2}\Big(\frac{m^2(k)m^2(2k)}{m^2(k)-m^2(2k)}-m^2(k)-2\Big)+O(\xi^3+\xi^2 a+a^2)
\intertext{and}
\langle \P_2,\P_3 \rangle&= 0+O(\xi^3+\xi^2 a+a^2),  \\
\langle \P_2,\P_4 \rangle&= -\frac{1}{2}i\xi a\frac{km(k)m'(k)}{m^2(k)+1}+O(\xi^3+\xi^2 a+a^2),  \\
\langle \P_3,\P_3 \rangle&=\langle \P_4,\P_4 \rangle= 1+O(\xi^3+\xi^2 a+a^2),\\
\langle \P_3,\P_4 \rangle&=0 +O(\xi^3+\xi^2 a+a^2) 
\end{align*}
as $\xi, a\to 0$. To summarize, \eqref{def:bnesq-B} becomes
\begin{align}
\mathbf{B}_{\xi,a} =&-\frac{a}{2} m(k)(m^2(k)+1)\begin{pmatrix} 0&0&0&0\\0&0&0&0\\0&0&0&0\\0&1&0&0\end{pmatrix}\label{E:B-bnesq}\\
&+i\xi \begin{pmatrix} -km'(k)&0&0&0\\ 0&-km'(k)&0&0\\ 0&0&m(k)&1\\0&0&1&m(k)\end{pmatrix}\notag \\
&+\frac{2i\xi a}{m^2(k)+1}\begin{pmatrix} 0&0&U_2-m^2(k)&
2m(k)U_2+\frac12 km'(k)(m^2(k)+2)\\ 0&0&0&0\\0&0&0&0\\0&0&0&0 \end{pmatrix} \notag \\
&+i\xi a \begin{pmatrix}0&0&0&0\\0&0&0&0\\U_2&0&0&0\\m(k)\left( 
\frac12(m^2(k)+3)+2U_2+2km(k)m'(k)\right)&0&0&0\end{pmatrix}\notag\\
&+\xi^2(km'(k)+\frac12k^2m''(k))\begin{pmatrix}0&1&0&0\\ -1&0&0&0\\ 0&0&0&0\\0&0&0&0 \end{pmatrix}+O(\xi^3+\xi^2 a+a^2)\notag
\end{align}
and
\begin{align}\label{E:I-bnesq}
\mathbf{I}_{\xi,a} =&\mathbf{I}+\frac12a\frac{2U_2-m^2(k)-2}{m^2(k)+1}\begin{pmatrix}0&0&0&2\\0&0&0&0\\0&0&0&0\\m^2(k)+1&0&0&0 \end{pmatrix} \\
&-i\xi a\frac12\frac{ k m(k)m'(k)}{(m^2(k)+1)^2}\begin{pmatrix} 0&0&0&0\\0&0&0&2\\0&0&0&0\\0&m^2(k)+1&0&0\end{pmatrix} +O(\xi^3+\xi^2 a+a^2)\notag
\end{align}
as $\xi, a\to 0$. Here $\mathbf{I}$ denotes the $4\times4$ identity matrix.

%%%%%%%%%%%%%%%%%%%%%%%%%%%%%%%%%%%%%%%%%%%%%%%%%%
%%%%%%%%%%%%%%%%%%%%%%%%%%%%%%%%%%%%%%%%%%%%%%%%%%
%%%%%%%%%%%%%%%%%%%%%%%%%%%%%%%%%%%%%%%%%%%%%%%%%%
\subsection{Proof of Theorem \ref{thm:bnesq}}
%%%%%%%%%%%%%%%%%%%%%%%%%%%%%%%%%%%%%%%%%%%%%%%%%%
%%%%%%%%%%%%%%%%%%%%%%%%%%%%%%%%%%%%%%%%%%%%%%%%%%
%%%%%%%%%%%%%%%%%%%%%%%%%%%%%%%%%%%%%%%%%%%%%%%%%%
We turn the attention to the roots of the characteristic polynomial
\[
\det(\mathbf{B}_{\xi,a}-\lambda \mathbf{I}_{\xi,a})=
D_4(\xi,a)\lambda^4+iD_3(\xi,a)\lambda^3+D_2(\xi,a)\lambda^2+iD_1(\xi,a)\lambda+D_0(\xi,a)
\]
for $\xi$ and $|a|$ sufficiently small, 
where $\mathbf{B}_{\xi,a}$ and $\mathbf{I}_{\xi,a}$ are in \eqref{E:B-bnesq} and \eqref{E:I-bnesq}.
Details are similar to those in \cite[Section~3.3]{HJ2}, or in the previous section.
Hence we merely hit the main points.

Note that $D_j$'s, $j=0,1,2,3,4,$ are all real and depend smoothly on $\xi$ and $a$ for $\xi,|a|$ sufficiently small.
Note moreover that $D_1,D_3$ are even in $\xi$ and $D_0,D_2,D_4$ are odd
and that $D_j$'s, $j=0,1,2,3,4,$ are even in $a$. 
Since $\lambda=0$ is a root with multiplicity four for $\xi=0$ and $|a|$ sufficiently small,
$D_j=\xi^{4-j}d_j$, $j=0,1,2,3,4,$ for some $d_j$'s, real and smooth in $\xi$, $a$ and even in $a$. 
We may therefore write that
\[
\det(\mathbf{B}_{\xi,a}-(-i\xi)\lambda \mathbf{I}_{\xi,a})= 
\xi^4 (d_4(\xi,a)\lambda^4-d_3(\xi,a)\lambda^3-d_2(\xi,a)\lambda^2+d_1(\xi,a)\lambda+d_0(\xi,a)).
\]
The underlying, periodic traveling wave of \eqref{E:bnesq}, and hence \eqref{E:main}, is then modulationally unstable,
provided that $\det(\mathbf{B}_{\xi,a}-(-i\xi)\lambda \mathbf{I}_{\xi,a})$ admits a pair of complex roots, 
or equivalently
\begin{align*}
\text{disc}_{\text{Bnesq}}(\xi,a):=
&256d_4^3d_0^3-192d_4^2d_3d_1d_0^2-128d_4^2d_2^2d_0^2+144d_4^2d_2d_1^2d_0-27d_4^2d_1^4 \\
&+144d_4d_3^2d_2d_0^2-6d_4d_3^2d_1^2d_0-80d_4d_3d_2^2d_1d_0+18d_4d_3d_2d_1^3+16d_4d_2^4d_0\\
&-4d_4d_2^3d_1^2-27d_3^4d_0^2+18d_3^3d_2d_1d_0-4d_3^3d_1^3-4d_3^2d_2^3d_0+d_3^2d_2^2d_1^2<0
\end{align*}
for $\xi$ and $|a|$ sufficiently small. We may furthermore write that 
\[
\text{disc}_{\text{Bnesq}}(\xi,a)=\text{disc}_{\text{Bnesq}}(k;\xi,0)+\text{ind}_{\text{Bnesq}}(k)a^2+O(a^2(a^2+\xi^2))
\]
as $\xi,a\to0$. Since  \eqref{E:B-bnesq} and \eqref{E:I-bnesq} become
\begin{align*}
\mathbf{B}_{\xi,0}=&i\xi \begin{pmatrix} -km'(k)&0&0&0\\0&-km'(k)&0&0\\0&0&m(k)&1\\0&0&1&m(k)\end{pmatrix}\\
&+\xi^2( km'(k)+\tfrac12k^2m''(k))\begin{pmatrix} 0&1&0&0\\-1&0&0&0\\0&0&0&0\\0&0&0&0\end{pmatrix}+O(\xi^3)
\end{align*}
and $\mathbf{I}_{\xi,0}=\mathbf{I}$ for $\xi$ sufficiently small, $\text{disc}_{\text{Bnesq}}(k;\xi,0)>0$ for all $k>0$.
Therefore if $\text{ind}_{\text{Bnesq}}(k)<0$ then $\text{disc}_{\text{Bnesq}}(k;\xi,a)<0$ 
for $\xi$ sufficiently small, depending on $a$ sufficiently small, implying modulational instability. 
A Mathematica calculation reveals that the sign of $\text{ind}_{\text{Bnesq}}(k)$ agrees with 
that of \eqref{def:ind-bnesq}. This complete the proof.

In case $\text{ind}_{\text{Bnesq}}(k)>0$, on the other hand, 
$\text{disc}_{\text{Bnesq}}(k;\xi,a)>0$ for all $k>0$ and $\xi,|a|$ sufficiently small,
whence $\mathbf{B}_{\xi,a}-(-i\xi)\lambda \mathbf{I}_{\xi,a}$ will admit 
either four real eigenvalues or four complex eigenvalues. 
We discuss in Appendix~\ref{sec:disc} the classification of the roots of a quartic polynomial,
 which will help to completely determine the modulational stability and instability.

%%%%%%%%%%%%%%%%%%%%%%%%%%%%%%%%%%%%%%%%%%%%%%%%%%
%%%%%%%%%%%%%%%%%%%%%%%%%%%%%%%%%%%%%%%%%%%%%%%%%%
%%%%%%%%%%%%%%%%%%%%%%%%%%%%%%%%%%%%%%%%%%%%%%%%%%
\section{Applications}\label{sec:applications}
%%%%%%%%%%%%%%%%%%%%%%%%%%%%%%%%%%%%%%%%%%%%%%%%%%
%%%%%%%%%%%%%%%%%%%%%%%%%%%%%%%%%%%%%%%%%%%%%%%%%%
%%%%%%%%%%%%%%%%%%%%%%%%%%%%%%%%%%%%%%%%%%%%%%%%%%
We illustrate the results in Section~\ref{sec:bbm} and Section~\ref{sec:bnesq} by discussing some examples.

%%%%%%%%%%%%%%%%%%%%%%%%%%%%%%%%%%%%%%%%%%%%%%%%%%
%%%%%%%%%%%%%%%%%%%%%%%%%%%%%%%%%%%%%%%%%%%%%%%%%%
%%%%%%%%%%%%%%%%%%%%%%%%%%%%%%%%%%%%%%%%%%%%%%%%%%
\subsection{The Benjamin-Bona-Mahony equation}\label{sec:bbm1}
%%%%%%%%%%%%%%%%%%%%%%%%%%%%%%%%%%%%%%%%%%%%%%%%%%
%%%%%%%%%%%%%%%%%%%%%%%%%%%%%%%%%%%%%%%%%%%%%%%%%%
%%%%%%%%%%%%%%%%%%%%%%%%%%%%%%%%%%%%%%%%%%%%%%%%%%
Note that 
\[
m(k)=\frac{1}{1+k^2}
\]
satisfies Assumption~\ref{A:m} and it reduces \eqref{E:bbm} to the BBM equation (see \eqref{E:bbm1}). 
For an arbitrary $k>0$, note from Lemma~\ref{lem:existence} that
\begin{equation}\label{E:soln-bbm1}
\left\{\begin{split}
&u(x,t;k,a)=a\cos(k(x-ct))+a^2\frac{1+k^2}{6k^2}(\cos(2k(x-ct))-3)+O(a^3),\hspace*{-10pt}\\
&c(k,a)=\frac{1}{1+k^2}-a^2\frac{5}{6k^2}+O(a^4),
\end{split}\right.
\end{equation}
for $|a|\ll1$, make a sufficiently small, $2\pi/k$-periodic wave of the BBM equation
traveling at the speed $c(k,a)$. Note that $c(k,a)<1$ for all $k>0$ and $|a|$ sufficiently small.
We pause to remark that another family of periodic traveling waves of the BBM equation with sufficiently small amplitudes
was constructed in \cite{Haragus08} near $c-1$, for which $c>1$ and $k<1$. 

A straightforward calculation reveals that 
\[
i_1(k)=\frac{2k(k^2-3)}{(1+k^2)^3}>0
\]
if and only if $k>\sqrt{3}$, 
\[
i_2^- (k)=-\frac{k^2(3+k^2)}{(1+k^2)^2}<0\quad\text{and}\quad i_3^- (k)=\frac{3k^2}{1+5k^2+4k^4}>0 
\]
for all $k>0$, where $i_1, i_2^-, i_3^-$ are in \eqref{def:i123}. Moreover 
\[
i_{\text{BBM}}(k)=\frac{k^2(3+5k^2)}{(1+k^2)^2(1+4k^2)}>0 
\]
for all $k>0$, where $i_{\text{BBM}}$ is in \eqref{def:i-bbm}.
Collectively, $\text{ind}_{\text{BBM}}(k)<0$ if and only if $k>\sqrt{3}$,
where $\text{ind}_{\text{BBM}}$ is in \eqref{def:ind-bbm}.
It then follows from Theorem~\ref{thm:bbm} that \eqref{E:soln-bbm1} is modulationally unstable if $k>\sqrt{3}$
and it is stable in the vicinity of the origin in the spectral plane, otherwise. 

Away from the origin in the spectral plane, since the $L^2_{2\pi}$-spectrum of $\mathcal{L}_{\xi,a}$ 
associated with \eqref{E:bbm1} is symmetric about the imaginary axis, 
its eigenvalues may leave the imaginary axis, leading to instability, as $\xi$ and $a$ vary, 
only through collisions with other purely imaginary eigenvalues. 
Recall \eqref{E:a=0bbm} and \eqref{def:w-bbm}. Since $m(k)$ decreases in $k$, we deduce that
\[ 
\dots <\omega_{-3,\xi}<\omega_{-2,\xi}<0<\omega_{1,\xi}<\omega_{2,\xi}<\omega_{3,\xi}<\dots
\]
for each $\xi\in[0,1/2]$. 
Moreover it is readily seen that $\omega_{0,\xi}<0$ and $\omega_{-1,\xi}>0$ for all $\xi\in[0,1/2]$. 
A straightforward calculation reveals that if $\omega_{-1,\xi}$ and $\omega_{n,\xi}$ collide 
for some $n\geq 1$ an integer and $\xi\in[0,1/2]$ then $n=1$, whence 
\[ 
k=\sqrt{\frac{3}{1-\xi^2}}\geq \sqrt{3}.
\]
But the underlying wave is modulationally unstable in the range.
Similarly if $\omega_{0,\xi}$ and $\omega_{n,\xi}$ collide for some $n\leq -2$ an integer and $\xi\in[0,1/2]$ then
\[ 
k=\sqrt{\frac{1-n^2+3n\xi-3\xi^2}{\xi^4-2n\xi^3+(n^2+1)\xi^2-n\xi}}\geq 2\sqrt{3/5}.
\]
For $k<2\sqrt{3/5}$, therefore, eigenvalue collide only at the origin, 
which incidentally does not lead to instability since $\text{ind}_{\text{BBM}}(k)>0$.
In other words, the underlying wave is spectrally stable. Below we summarize the conclusion.

%%%%%%%%%%%%%%%%%%%%%%%%%%%%%%%%%%%%%%%%%%%%%%%%%%
%%%%%%%%%%%%%%%%%%%%%%%%%%%%%%%%%%%%%%%%%%%%%%%%%%
%%%%%%%%%%%%%%%%%%%%%%%%%%%%%%%%%%%%%%%%%%%%%%%%%%
\begin{corollary}[Modulational instability vs. spectral stability for the BBM equation]\label{cor:bbm1}
A sufficiently small, $2\pi/k$-periodic traveling wave of \eqref{E:bbm1} 
is spectrally unstable to long wavelength perturbations if $k>\sqrt{3}$,
and it is spectrally stable to square integrable perturbations if $0<k<2\sqrt{3/5}$.
\end{corollary}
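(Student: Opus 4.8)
The plan is to specialize Theorem~\ref{thm:bbm} to the symbol $m(k)=1/(1+k^2)$, for which Assumption~\ref{A:m} is immediate and Lemma~\ref{lem:existence} produces the wave \eqref{E:soln-bbm1}, and then to reinforce the resulting near-origin conclusion with a global argument elsewhere in the spectral plane. I would first dispatch the instability claim. Differentiating $km(k)=k/(1+k^2)$ and substituting into \eqref{def:i123} and \eqref{def:i-bbm} reduces $i_1,i_2^-,i_3^-,i_{\text{BBM}}$ to explicit rational functions of $k$. A direct inspection then shows $i_2^-(k)<0$, $i_3^-(k)>0$, and $i_{\text{BBM}}(k)>0$ for every $k>0$, whereas $i_1(k)=(km(k))''$ changes sign precisely at $k=\sqrt{3}$, being positive for $k>\sqrt{3}$. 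Since the three factors $i_2^-,i_{\text{BBM}},i_3^-$ have fixed sign, the sign of $\text{ind}_{\text{BBM}}(k)$ in \eqref{def:ind-bbm} equals the opposite of the sign of $i_1(k)$, so $\text{ind}_{\text{BBM}}(k)<0$ exactly when $k>\sqrt{3}$, and Theorem~\ref{thm:bbm} yields spectral instability to long wavelength perturbations there.

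The stability claim needs more than the local index. The same computation gives $\text{ind}_{\text{BBM}}(k)>0$ for $0<k<\sqrt{3}$, so Theorem~\ref{thm:bbm} guarantees stability \emph{only in the vicinity of the origin}; to obtain spectral stability to all square integrable perturbations I must rule out eigenvalues departing the imaginary axis elsewhere. The structural input is that the $L^2_{2\pi}$-spectrum of $\mathcal{L}_{\xi,a}$ is symmetric about the imaginary axis, so a purely imaginary eigenvalue can leave it, as $\xi$ and $a$ vary, only after a collision with another purely imaginary eigenvalue. At $a=0$ the spectrum is $\{i\omega_{n,\xi}\}$ from \eqref{E:a=0bbm}--\eqref{def:w-bbm}, so the plan is to locate every coincidence $\omega_{n,\xi}=\omega_{m,\xi}$. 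Using that $m(k)=1/(1+k^2)$ is decreasing, I would establish the ordering $\cdots<\omega_{-2,\xi}<0<\omega_{1,\xi}<\omega_{2,\xi}<\cdots$ together with $\omega_{0,\xi}<0$ and $\omega_{-1,\xi}>0$ for $\xi\in(0,1/2]$. Because each of the two monotone clusters has no internal collisions, the only candidates are a coincidence of the stray frequency $\omega_{-1,\xi}$ with some $\omega_{n,\xi}$, $n\geq1$, or of $\omega_{0,\xi}$ with some $\omega_{n,\xi}$, $n\leq-2$.

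It then remains to solve these two coincidence families. Setting $\omega_{-1,\xi}=\omega_{n,\xi}$, $n\geq1$, and simplifying, I would show the equation forces $n=1$ and $k=\sqrt{3/(1-\xi^2)}\geq\sqrt{3}$; setting $\omega_{0,\xi}=\omega_{n,\xi}$, $n\leq-2$, I would show every solution obeys $k\geq 2\sqrt{3/5}$. Since $2\sqrt{3/5}<\sqrt{3}$, both thresholds lie at or above $2\sqrt{3/5}$, so for $0<k<2\sqrt{3/5}$ no collision occurs off the origin. The sole remaining coincidence is at the origin, $\omega_{1,0}=\omega_{-1,0}=\omega_{0,0}=0$, and there the positivity $\text{ind}_{\text{BBM}}(k)>0$ shows the colliding eigenvalues stay on the imaginary axis. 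Combining the two arguments gives spectral stability for $0<k<2\sqrt{3/5}$. I expect the main obstacle to be precisely this collision analysis: confirming the stated frequency ordering uniformly over $\xi\in[0,1/2]$ and verifying that the coincidence equations admit no spurious solutions below the thresholds, since this must be checked across all integer indices rather than settled by a single local expansion.
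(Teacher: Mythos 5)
Your proposal is correct and follows essentially the same route as the paper: the instability half is exactly the paper's computation of $i_1,i_2^-,i_3^-,i_{\text{BBM}}$ for $m(k)=1/(1+k^2)$ (with the sign of $\text{ind}_{\text{BBM}}$ opposite to that of $i_1$), and the stability half is the paper's collision argument, using the symmetry of the spectrum about the imaginary axis, the ordering of the frequencies $\omega_{n,\xi}$, and the same two coincidence families yielding the thresholds $\sqrt{3/(1-\xi^2)}\geq\sqrt{3}$ and $2\sqrt{3/5}$. The paper resolves the coincidence equations by the same direct calculations you anticipate, so no additional idea is needed.
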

%%%%%%%%%%%%%%%%%%%%%%%%%%%%%%%%%%%%%%%%%%%%%%%%%%
%%%%%%%%%%%%%%%%%%%%%%%%%%%%%%%%%%%%%%%%%%%%%%%%%%
%%%%%%%%%%%%%%%%%%%%%%%%%%%%%%%%%%%%%%%%%%%%%%%%%%

For $2\sqrt{3/5}<k\leq \sqrt{3}$, one may make a Krein signature calculation to study the stability and instability.
But we do not pursue here.

Corollary~\ref{cor:bbm1} agrees with that in \cite{J2010}, 
where the author proved that periodic traveling waves of the BBM equation of sufficiently large periods,
or conversely sufficiently small wave numbers, (but not necessarily small amplitudes)
are modulationally stable. As a matter of fact, periodic traveling waves of the BBM equation 
are expected to tend to solitary waves as their period increases to infinity.

Moreover Corollary~\ref{cor:bbm1} complements the result in \cite{Haragus08},
where the author employed a similar method to show that 
a periodic traveling wave of \eqref{E:bbm1} with sufficiently small amplitude near $c-1$
is modulationally stable. 

%%%%%%%%%%%%%%%%%%%%%%%%%%%%%%%%%%%%%%%%%%%%%%%%%%
%%%%%%%%%%%%%%%%%%%%%%%%%%%%%%%%%%%%%%%%%%%%%%%%%%
%%%%%%%%%%%%%%%%%%%%%%%%%%%%%%%%%%%%%%%%%%%%%%%%%%
\subsection{The regularized Boussinesq equation}\label{sec:bnesq1}
%%%%%%%%%%%%%%%%%%%%%%%%%%%%%%%%%%%%%%%%%%%%%%%%%%
%%%%%%%%%%%%%%%%%%%%%%%%%%%%%%%%%%%%%%%%%%%%%%%%%%
%%%%%%%%%%%%%%%%%%%%%%%%%%%%%%%%%%%%%%%%%%%%%%%%%%
Note that 
\[
m(k)=\sqrt{\frac{1}{1+k^2}}
\]
satisfies Assumption~\ref{A:m} and it reduces \eqref{E:bnesq} to the regularized Boussinesq equation (see \eqref{E:bnesq1}). 
For an arbitrary $k>0$, note from Lemma~\ref{lem:exist-bnesq} that 
\begin{equation}\label{E:soln-bnesq1}
\left\{\begin{split}
&u(x,t;k,a)=a\frac{1}{\sqrt{1+k^2}}\cos(k(x-ct))+a^2\frac{1}{6k^2}(\cos(2k(x-ct))-3)+O(a^3),\hspace*{-20pt}  \\
&c(k,a)=\frac{1}{\sqrt{1+k^2}}\Big(1-\frac{5}{12k^2}a^2\Big)+O(a^4),
\end{split}\right.
\end{equation}
for $|a|\ll1$, make a sufficiently small, $2\pi/k$-periodic wave of \eqref{E:bnesq1} traveling at the speed $c(k,a)$. 
A straightforward calculation reveals that 
\[
i_1(k)=-\frac{3k}{(1+k^2)^{5/2}}<0
\]
for all $k>0$, 
\[
i_2^- i_2^+ (k)=\frac{1-(1+k^2)^3}{(1+k^2)^3}<0\quad\text{and}\quad
 i_3^- i_3^+ (k)= \frac{1}{1+k^2}-\frac{1}{1+4k^2}>0
\]
for all $k>0$, where $i_1,i_2^-,i_3^-$ are in \eqref{def:i123} and $i_2^+,i_3^+$ are in \eqref{def:i+}. Moreover
\[
i_{\text{Bnesq}}(k)=\frac{k^2(5k^6+14k^4+12k^2+2)}{(1+k^2)^4(1+4k^2)}>0 
\]
for all $k>0$, where $i_{\text{Bnesq}}$ is in \eqref{def:i-bnesq}. 
Collectively, $\text{ind}_{\text{Bnesq}}(k)>0$ for all $k>0$, 
where $\text{ind}_{\text{Bnesq}}$ is in \eqref{def:ind-bnesq}. 
This is inconclusive since if the discriminant of the quartic polynomial
$\text{disc}_{\text{Bnesq}}(\mathbf{B}_{\xi,a}-(-i\xi)\lambda\mathbf{I}_{\xi,a})>0$ for all $k>0$, 
where $\mathbf{B}_{\xi,a}$ and $\mathbf{I}_{\xi,a}$ are in \eqref{E:B-bnesq} and \eqref{E:I-bnesq}, 
then $P(\lambda;k,\xi,a):=\det(\mathbf{B}_{\xi,a}-(-i\xi)\lambda\mathbf{I}_{\xi,a})$
possesses either four real roots, implying stability, or two pairs of complex roots, implying instability; 
see the previous section for details.

In order to determine the nature of the roots of the quartic characteristic polynomial,
we calculate additional discriminants \eqref{def:disc1} and \eqref{def:disc2} in Theorem~\ref{thm:quartic}.
A Mathematica calculation reveals that 
\begin{align*}
\text{disc}_1(P)=&-4(2+(m(k)+km'(k))^2)+O(\xi^2+a^2)<0, \\
\text{disc}_2(P)=&-16(1+2(m(k)+km'(k))^2)+O(\xi^2+a^2)<0
\end{align*}
as $\xi,a\to0$. %They are negative for all $k>0$ and $\xi,|a|$ sufficiently small. 
Therefore it follows from Theorem~\ref{thm:quartic} that $P=\det(\mathbf{B}_{\xi,a}-(-i\xi)\lambda\mathbf{I}_{\xi,a})$
possesses four real roots for all $k>0$ for $\xi$ and $|a|$ sufficiently small. 
In other words, the underlying wave is modulationally stable.
Below we summarize the conclusion.

%%%%%%%%%%%%%%%%%%%%%%%%%%%%%%%%%%%%%%%%%%%%%%%%%%
%%%%%%%%%%%%%%%%%%%%%%%%%%%%%%%%%%%%%%%%%%%%%%%%%%
%%%%%%%%%%%%%%%%%%%%%%%%%%%%%%%%%%%%%%%%%%%%%%%%%%
\begin{corollary}[Modulational stability for regularized Boussinesq equation]\label{cor:bnesq1}
A sufficiently small, $2\pi/k$-periodic traveling wave of \eqref{E:bnesq1} 
is stable to square integrable perturbations in the vicinity of the origin in the spectral plane.
\end{corollary}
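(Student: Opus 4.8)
The plan is to specialize Theorem~\ref{thm:bnesq} to the symbol $m(k)=\sqrt{1/(1+k^2)}$ and, because the resulting index turns out to be inconclusive, to finish with the quartic root classification of Appendix~\ref{sec:disc}. First I would verify that this $m$ satisfies (M1)--(M4) of Assumption~\ref{A:m}: it is even and smooth with $m(0)=1$, it behaves like $|k|^{-1}$ as $|k|\to\infty$ (so $\alpha=-1$ in (M3)), and it is strictly decreasing on $(0,\infty)$, so $m(k)\neq m(2k)$, giving (M4). Hence Lemma~\ref{lem:exist-bnesq} produces the small-amplitude wave \eqref{E:soln-bnesq1}, and the spectral reduction culminating in the matrices \eqref{E:B-bnesq}--\eqref{E:I-bnesq} applies. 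I would then evaluate the factors $i_1$, $i_2^\pm$, $i_3^\pm$ from \eqref{def:i123}--\eqref{def:i+} and $i_{\text{Bnesq}}$ from \eqref{def:i-bnesq} by direct differentiation, finding $i_1(k)<0$, $i_2^-i_2^+(k)<0$, $i_3^-i_3^+(k)>0$ and $i_{\text{Bnesq}}(k)>0$ for all $k>0$. Assembling these into \eqref{def:ind-bnesq} gives $\text{ind}_{\text{Bnesq}}(k)>0$ for every $k>0$, so the sufficient condition for instability in Theorem~\ref{thm:bnesq} is never triggered.

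The main obstacle is that a positive $\text{ind}_{\text{Bnesq}}$ is not by itself decisive. As noted at the close of Section~\ref{sec:MI-bnesq}, positivity of $\text{disc}_{\text{Bnesq}}$ only tells us that the four roots of the reduced characteristic polynomial $P(\lambda):=\det(\mathbf{B}_{\xi,a}-(-i\xi)\lambda\mathbf{I}_{\xi,a})$ are either all real or form two complex-conjugate pairs. In the former case the near-origin spectrum of $\L_{\xi,a}$ stays on the imaginary axis (stability); in the latter it leaves the axis (instability). Distinguishing these two scenarios, rather than merely computing the index, is where the real work lies.

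To settle it I would invoke Theorem~\ref{thm:quartic} of Appendix~\ref{sec:disc} and compute the two supplementary discriminants $\text{disc}_1(P)$ and $\text{disc}_2(P)$ of \eqref{def:disc1}--\eqref{def:disc2} from the leading-order entries of \eqref{E:B-bnesq} and \eqref{E:I-bnesq}. A (Mathematica-assisted) calculation should yield
\[
\text{disc}_1(P)=-4\big(2+(m(k)+km'(k))^2\big)+O(\xi^2+a^2),\quad
\text{disc}_2(P)=-16\big(1+2(m(k)+km'(k))^2\big)+O(\xi^2+a^2),
\]
both strictly negative as $\xi,a\to0$. By Theorem~\ref{thm:quartic}, the sign pattern of positive $\text{disc}_{\text{Bnesq}}$ together with negative $\text{disc}_1$ and negative $\text{disc}_2$ forces $P$ to have four real roots for all $k>0$ and all sufficiently small $\xi,|a|$. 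Consequently the four eigenvalues of $\L_{\xi,a}$ bifurcating from the origin remain purely imaginary, so the wave \eqref{E:soln-bnesq1} is spectrally stable to square integrable perturbations in the vicinity of the origin, which is precisely the assertion of Corollary~\ref{cor:bnesq1}.
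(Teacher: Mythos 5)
Your proposal is correct and follows essentially the same route as the paper: specialize the index factors to $m(k)=\sqrt{1/(1+k^2)}$, observe that $\text{ind}_{\text{Bnesq}}(k)>0$ for all $k>0$ is inconclusive, and then resolve the real-versus-complex-pairs dichotomy via the supplementary discriminants $\text{disc}_1(P)$, $\text{disc}_2(P)$ of Theorem~\ref{thm:quartic}, whose leading-order values you state exactly as the paper computes them. Nothing essential differs from the paper's argument.
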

%%%%%%%%%%%%%%%%%%%%%%%%%%%%%%%%%%%%%%%%%%%%%%%%%%
%%%%%%%%%%%%%%%%%%%%%%%%%%%%%%%%%%%%%%%%%%%%%%%%%%
%%%%%%%%%%%%%%%%%%%%%%%%%%%%%%%%%%%%%%%%%%%%%%%%%%
 
Here we do not study collision of eigenvalues away from the origin.

%%%%%%%%%%%%%%%%%%%%%%%%%%%%%%%%%%%%%%%%%%%%%%%%%%
%%%%%%%%%%%%%%%%%%%%%%%%%%%%%%%%%%%%%%%%%%%%%%%%%%
%%%%%%%%%%%%%%%%%%%%%%%%%%%%%%%%%%%%%%%%%%%%%%%%%%
\subsection{Equations with fractional dispersion}\label{sec:fdispersion}
%%%%%%%%%%%%%%%%%%%%%%%%%%%%%%%%%%%%%%%%%%%%%%%%%%
%%%%%%%%%%%%%%%%%%%%%%%%%%%%%%%%%%%%%%%%%%%%%%%%%%
%%%%%%%%%%%%%%%%%%%%%%%%%%%%%%%%%%%%%%%%%%%%%%%%%%
Note that 
\[
m(k)=1+|k|^\alpha
\]
satisfies Assumption~\ref{A:m} if $\alpha\geq 2$\footnote{
Assumption (M1) dictates that $\alpha\geq 2$. The authors believe that one may relax the regularity requirement, but here we are not interested in achieving a minimal regularity requirement.}
 and it reduces \eqref{E:kdv}, \eqref{E:bbm} and \eqref{E:bnesq} to
\begin{equation}\label{E:fkdv}
u_t+(1+\Lambda^{\alpha})u_x+(u^2)_x=0,
\end{equation}
\begin{equation}\label{E:fbbm}
u_t+(1+\Lambda^{\alpha})(u+u^2)_x=0
\end{equation}
and
\begin{equation}\label{E:fbnesq}
u_{tt}+(1+\Lambda^{\alpha})^2(u+u^2)_{xx}=0,
\end{equation} 
respectively, where $\Lambda=\sqrt{-\partial_x^2}$ is defined via the Fourier transform as 
\[
\widehat{\Lambda f}(k)=|k|\widehat{f}(k).
\] 
In the case of $\alpha=2$, note that \eqref{E:fkdv} corresponds to the KdV equation (see \eqref{E:kdv1}), 
and in the case of $\alpha=1$ the Benjamin-Ono equation. 
In the case of\footnote{Note that $\Lambda^\alpha\partial_x$ is not singular if $\alpha\geq -1$.} 
$\alpha=-1/2$, moreover, \eqref{E:fkdv} was argued in \cite{Hur-blowup}
to have relevance to water waves in two dimensions in the infinite depth.

Since \eqref{E:fkdv}, \eqref{E:fbbm} and \eqref{E:fbnesq} share the dispersion relation in common, 
$(i_1i_2^-i_3^-)(k)$ in \eqref{def:ind-kdv} and \eqref{def:ind-bbm}, 
and $(i_1i_2^-i_2^+i_3^-i_3^+)(k)$ in \eqref{def:ind-bnesq}
enjoy the same sign. As a matter of fact, they are positive for all $k>0$. 
Consequently, the modulational stability and instability of a sufficiently small, periodic traveling wave of 
\eqref{E:fkdv}, \eqref{E:fbbm} or \eqref{E:fbnesq} is determined by the sign of 
(see \eqref{def:i-kdv}, \eqref{def:i-bbm} or \eqref{def:i-bnesq})
\begin{align*}
i_{\text{KdV}}(k)=&3-2^{1+\alpha}+\alpha,\\
i_{\text{BBM}}(k)=&3 - 2^{(1 + \alpha)} + \alpha + 2^{\alpha} (1 + \alpha) k^{\alpha},\\
i_{\text{Bnesq}}(k)=&2(1+k^{\alpha})^2+(1+2^{\alpha}k^{\alpha})^2(-3+(1+k^{\alpha})^2(1+(1+\alpha)k^{\alpha})^2),
\end{align*}
respectively.

Note that $i_{\text{KdV}}$ is independent of $k$ and it is negative if $\alpha<1$,
implying modulationally instability, whereas it is positive if $\alpha>1$, implying modulational stability. 
The result agrees with that in \cite{J2013}, 
which requires that the dispersion symbol be merely once continuously differentiable. 

\begin{figure}[h] 
\includegraphics[scale=0.7]{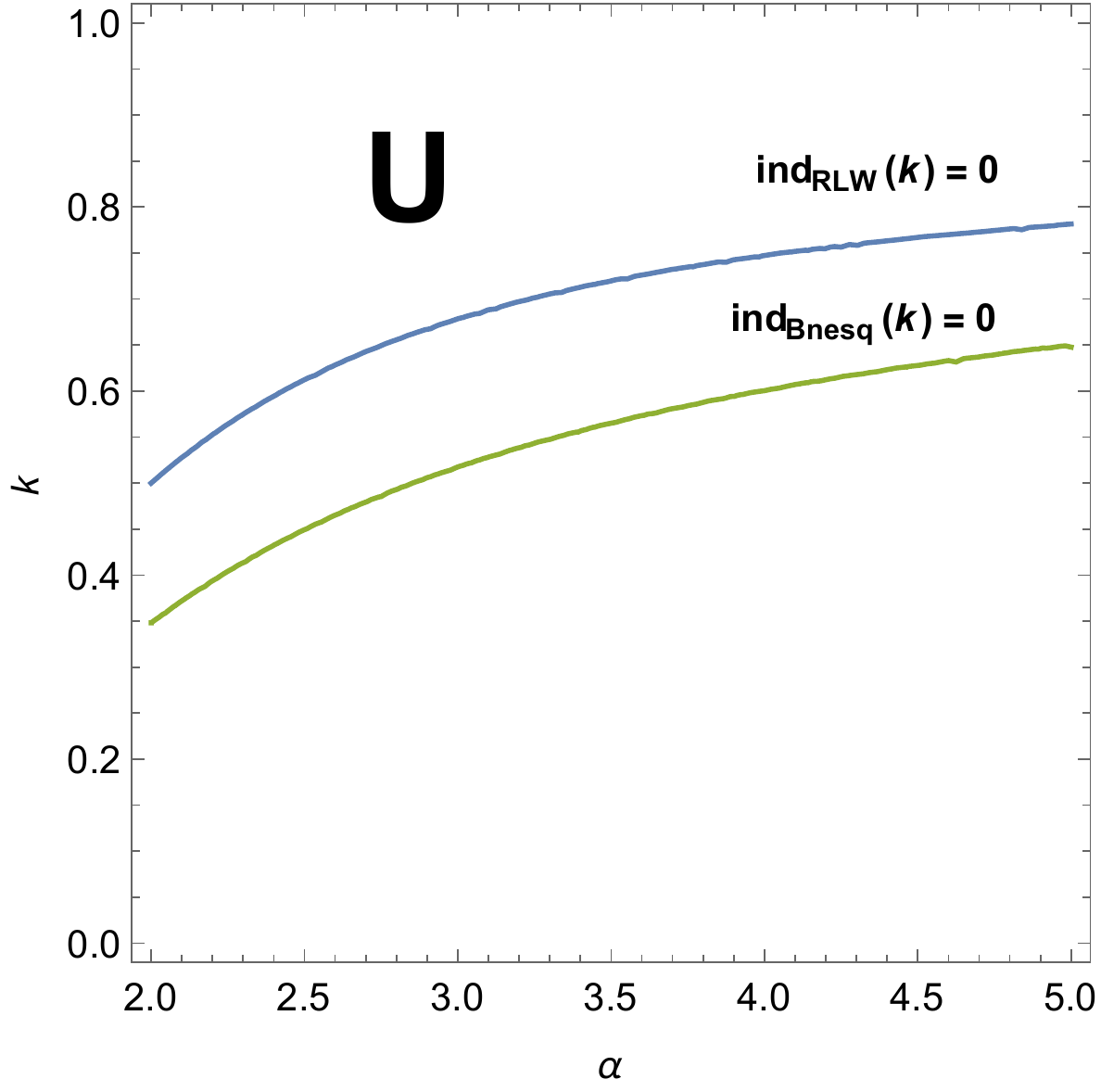}\quad
\caption{Stability diagram in the $\alpha$ versus $k$ plane for sufficiently small periodic traveling waves 
of \eqref{E:fbbm} and \eqref{E:fbnesq} with fractional dispersion. 
Above the level curves, where $ind_{BBM}(k)=0$ and $ind_{Bnesq}(k)=0$ are the regions of modulational instability.}
\end{figure}\label{f:fracdisp}

%On the other hand, the signs of $i_{\text{BBM}}$ and $i_{\text{Bnesq}}$ depend on $k$ and $\alpha$. 
Figure~1 illustrates regions of instability for \eqref{E:fbbm} and \eqref{E:fbnesq}
in the range $\alpha>2$, where all solutions of \eqref{E:fkdv} are stable. We deduce that 
for each $\alpha>2$, a sufficiently small, periodic traveling wave of \eqref{E:fbbm} or \eqref{E:fbnesq}
is modulationally unstable if the wave number is greater than a critical value. 
The critical wave number for \eqref{E:fbbm} is larger than that for \eqref{E:fbnesq}, 
implying that the nonlinear effects of \eqref{E:fbnesq} are stronger.

\subsection*{Acknowledgements}
The authors thank Mariana Haragus for helpful discussions.
VMH is supported by the National Science Foundation CAREER DMS-1352597, 
an Alfred P. Sloan research fellowship, and an Arnold O. Beckman research award RB14100, 
a Beckman fellowship of Center for Advanced Study at the University of Illinois at Urbana-Champaign. 
AKP is supported through an Arnold O. Beckman research award RB14100 at the University of Illinois at Urbana-Champaign.

%%%%%%%%%%%%%%%%%%%%%%%%%%%%%%%%%%%%%%%%%%%%%%%%%%
%%%%%%%%%%%%%%%%%%%%%%%%%%%%%%%%%%%%%%%%%%%%%%%%%%
%%%%%%%%%%%%%%%%%%%%%%%%%%%%%%%%%%%%%%%%%%%%%%%%%%
\begin{appendix}
%%%%%%%%%%%%%%%%%%%%%%%%%%%%%%%%%%%%%%%%%%%%%%%%%%
%%%%%%%%%%%%%%%%%%%%%%%%%%%%%%%%%%%%%%%%%%%%%%%%%%
%%%%%%%%%%%%%%%%%%%%%%%%%%%%%%%%%%%%%%%%%%%%%%%%%%

%%%%%%%%%%%%%%%%%%%%%%%%%%%%%%%%%%%%%%%%%%%%%%%%%%
%%%%%%%%%%%%%%%%%%%%%%%%%%%%%%%%%%%%%%%%%%%%%%%%%%
%%%%%%%%%%%%%%%%%%%%%%%%%%%%%%%%%%%%%%%%%%%%%%%%%%
\section{Proof of Lemma~\ref{lem:existence}}\label{sec:existence}
%%%%%%%%%%%%%%%%%%%%%%%%%%%%%%%%%%%%%%%%%%%%%%%%%%
%%%%%%%%%%%%%%%%%%%%%%%%%%%%%%%%%%%%%%%%%%%%%%%%%%
%%%%%%%%%%%%%%%%%%%%%%%%%%%%%%%%%%%%%%%%%%%%%%%%%%
The proof follows along the same line as the arguments in \cite[Appendix~A]{J2013}, for instance.
We may assume that $\alpha<0$ in (M3) in Assumption~\ref{A:m}. Let
\[ 
F(u;k,c,b)=\M_k (u+u^2)-cu-(c-1)^2b
\]
and note from (M3) of Assumption~\ref{A:m} and the Sobolev inequality that 
$F:H^{1}_{2\pi}\times \mathbb{R}_+\times \mathbb{R}_+\times \mathbb{R} \to H^1_{2\pi}$ is well defined.  
In the case of $\alpha>0$ in (M3) of Assumption~\ref{A:m}, let
\[ F(u;k,c,b)=u+u^2-c\M_k^{-1}u-(c-1)^2b,\]
instead, and the proof is nearly identical. Note that 
\[
\partial_uF(u;k,c,b)v=(\M_k(1+2u)-c)v \in L^2_{2\pi}, \qquad v\in H^{1}_{2\pi},
\] 
and $\partial_k F(u;k,c,b)\delta:=\M'_\delta (u+u^2)$, $\delta \in \mathbb{R}$, are continuous, 
where a straightforward calculation reveals that
\[
\M'_\delta e^{inz}=\delta n m'(k n)e^{inz} \quad \text{for}\quad n\in\mathbb{Z}.
\] 
Since 
\[
\partial_cF(u;k,c,b)=-u+2(c-1)b\quad\text{and}\quad \partial_bF(u;k,c,b)=-(c-1)^2
\] 
are continuous, we deduce that 
$F:H^{1}_{2\pi} \times \mathbb{R}_+\times \mathbb{R}_+\times \mathbb{R} \to H^1_{2\pi}$ is $C^1$. 
Recall that $u_0$ and $c_0$, in \eqref{def:u0} and \eqref{def:c0}, satisfy that 
\[
Le^{\pm iz}=:(\M_k(1+2u_0)-c_0)e^{\pm iz}=0.
\] 

For arbitrary $k>0$ and $|b|$ sufficiently small, we seek a non-constant solution $u\in H^{1}_{2\pi}$ near $u_0$ of
\begin{equation}\label{E:F}
F(u;k,c,b)=0
\end{equation}
for some $c$ near $c_0$. Let 
\[ 
u(z)=u_0(k,b)+\frac{1}{2}ae^{iz}+\frac{1}{2}\bar{a}e^{-iz}+v(z)\quad\text{and}\quad c=c_0+r,
\]
where $a\in \mathbb{C}$ and $v\in H_{2\pi}^{1}$ satisfying that
\[ 
\int_{-\pi}^\pi v(z)e^{\pm iz}~dz=0,
\]
and $r\in \mathbb{R}$. Substituting these into \eqref{E:F} and using $Le^{\pm iz}=0$, we arrive at that
\begin{equation}\label{ldef}
Lv=:g(a,\bar{a},v,r,b),
\end{equation}
where $g$ is analytic in its argument and $g(0,0,0,r,b)=0$ for all $r,b\in \mathbb{R}$. 
We define $\Pi:L_{2\pi}^2\rightarrow \text{ker}L$ as
\[ \Pi f(z)=\widehat{f}(1)e^{iz}+\widehat{f}(-1)e^{-iz}.\]
Since $\Pi v=0$, we may write \eqref{ldef} as
\begin{equation}\label{lpdef}
Lv=(I-\Pi)g(a,\bar{a},v,r,b)\quad \text{and} \quad 0=\Pi g(a,\bar{a},v,r,b).
\end{equation}
Note that %restriction of $L$ to $(I-\Pi)H_{2\pi}^{1}$ has a bounded inverse, given by
\[ 
( L_{|(I-\Pi)H_{2\pi}^{1}})^{-1}f(z)=\sum_{n\neq \pm 1}\frac{\widehat{f}(n)}{(1+2u_0)m(kn)-c_0}e^{inz}.\]
Consequently, we may rewrite \eqref{lpdef} as
\begin{equation}\label{linpdef}
v=L^{-1}(I-\Pi)g(a,\bar{a},v,r,b)\quad \text{and}\quad 0=\Pi g(a,\bar{a},v,r,b).
\end{equation}
Clearly $( L_{|(I-\Pi)H_{2\pi}^{1}})^{-1}$ depends analytically on its arguments. 

It follows from the implicit function theorem that a unique solution
\[ v=V(a,\bar{a},r,b)\in (I-\Pi)H_{2\pi}^{1}\]
exists to the former equation in \eqref{linpdef} in the vicinity of $(a,\bar{a},r,b)=(0,0,0,b)$,
which depends analytically on its argument. By uniqueness, moreover,
\begin{equation}\label{Vdef}
V(0,0,r,b)=0 \qquad\text{for all $r\in \mathbb{R}$ and $|b|$ sufficiently small.}
\end{equation}
Since \eqref{E:quad-bbm} remains invariant under $z\rightarrow z+z_0$ and $z\rightarrow -z$, it follows that
\begin{align}
V(a,\bar{a},r,b)(z+z_0)=V(ae^{iz_0},\bar{a}e^{-iz_0},r,b)\quad\text{and}\quad 
V(a,\bar{a},r,b)(-z)=V(a,\bar{a},r,b)(z) \label{Vprop}
\end{align}
for any $z_0\in\mathbb{R}$.
To proceed, we rewrite the latter equation in \eqref{linpdef} as 
\[
\Pi g(a,\bar{a}, V(a,\bar{a},r,b),r,b)=0,
\]
which is solvable provided that
\begin{equation*}
Q_{\pm}(a,\bar{a},r,b):=\int_{-\pi}^\pi \frac{1}{2}(ae^{iz}\pm \bar{a}e^{-iz})g(a,\bar{a},V(a,\bar{a},r,b),r,b)~dz=0.
\end{equation*}
Taking $z_0=-2\arg (a)$ in \eqref{Vprop} we find that 
\[
Q_- (\bar{a},a,r,b)=Q_- (a,\bar{a},r,b)=-Q_- (\bar{a},a,r,b).
\]
Therefore $Q_- (a,\bar{a},r,b)=0$, which is trivial. Taking $z_0=-\arg (a)$ in \eqref{Vprop}, similarly, 
\[
Q_+ (a,\bar{a},r,b)=Q_+ (|a|,|a|,r,b).
\] 
Therefore $Q_+ (a,a,r,b)=0$ for any $a\in \mathbb{R}$. 
Since \eqref{Vdef} implies that $a^{-1}V(a,a,r,b)$ is analytic in $a$ for $|a|$ sufficiently small,
 we arrive at that
\begin{align*}
Q_+ (a,a,r,b)= \int_{-\pi}^\pi a(\cos z) g(a,\bar{a},V(a,\bar{a},r,b)(z),r,b)~dz=: a^2(\pi r+R(a,r,b)),
\end{align*}
where $R$ is analytic in its argument, even in $a$ and $R(0,0,b)=\partial_r R(0,0,b)=0$. 
It then follows from the implicit function theorem that a unique solution to
\[ \pi r(a,b)+R(a,r(a,b),b)=0\]
exists for $|a|$ sufficiently small, which is real analytic for $|a|$ sufficiently small and even in $a$.
To summarize,
\[ 
(v,r)=(V(a,\bar{a},r,b), r(|a|,b))
\]
uniquely solve \eqref{linpdef} for $|a|$, $|b|$ sufficiently small. Consequently,
\[ 
u(z)=u_0+a\cos z+V(a,a,r(|a|,b),b)(z)\quad\text{and}\quad c=c_0+r(|a|,b)
\]
solve \eqref{E:F} for $|a|$, $|b|$ sufficiently small. 

\

It remains to show \eqref{E:u(k,a,b)} and \eqref{E:c(k,a,b)}. 
Let $k>0$ be fixed and suppressed to simplify the exposition. We assume that $b=0$.
Since $u$ and $c$ depend analytically on $a$ for $|a|,|b|$ sufficiently small 
and since $c$ is even in $a$, we write that
\begin{align*}
u(k,a,b)(z):=& u_0(k,b)+ a \cos z+a^2u_2(z)+a^3u_3(z)+O(a^4) 
\intertext{and} 
c(k,a,b):=&c_0(k,b)+a^2c_2+O(a^4)
\end{align*}
as $a\to 0$, where $u_2$, $u_3,\dots$ are even and $2\pi$-periodic in $z$. 
Substituting these into \eqref{E:quad-bbm}, at the order of $a^2$, we gather that
\[
\M_k (u_2(z)+\cos^2z)-m(k)u_2(z)=0.
\]
A straightforward calculation then reveals that 
\[
u_2(z)=\frac12\Big(\frac{1}{m(k)-1}+\frac{m(2k)\cos(2z)}{m(k)-m(2k)}\Big).
\] 
Continuing, at the order of $a^3$, 
\[
\M_k (u_3(z)+2u_2(z)\cos z)-m(k)u_3(z)-c_2\cos z=0,
\]
whence 
\[
c_2=m(k)\Big(\frac{1}{m(k)-1}+\frac12\frac{m(2k)}{m(k)-m(2k)}\Big).
\] 
This completes the proof.

%%%%%%%%%%%%%%%%%%%%%%%%%%%%%%%%%%%%%%%%%%%%%%%%%%
%%%%%%%%%%%%%%%%%%%%%%%%%%%%%%%%%%%%%%%%%%%%%%%%%%
%%%%%%%%%%%%%%%%%%%%%%%%%%%%%%%%%%%%%%%%%%%%%%%%%%
\section{Classification of roots of a quartic polynomial}\label{sec:disc}
%%%%%%%%%%%%%%%%%%%%%%%%%%%%%%%%%%%%%%%%%%%%%%%%%%
%%%%%%%%%%%%%%%%%%%%%%%%%%%%%%%%%%%%%%%%%%%%%%%%%%
%%%%%%%%%%%%%%%%%%%%%%%%%%%%%%%%%%%%%%%%%%%%%%%%%%
Here we classify the roots of a quartic polynomial. The presentation is adopted from \cite{Rees}.

%%%%%%%%%%%%%%%%%%%%%%%%%%%%%%%%%%%%%%%%%%%%%%%%%%
%%%%%%%%%%%%%%%%%%%%%%%%%%%%%%%%%%%%%%%%%%%%%%%%%%
%%%%%%%%%%%%%%%%%%%%%%%%%%%%%%%%%%%%%%%%%%%%%%%%%%
\begin{theorem}\label{thm:quartic}
Let
\[ 
P(x)=p_4x^4+p_3x^3+p_2x^2+p_1x+p_0, \qquad a\neq 0,
\]
and $x_1,x_2,x_3,x_4$ be roots of $P$. Let 
\begin{align*}
\text{disc}(P)
=&256 p_4^3p_0^3-192p_4^2p_3p_1p_0^2-128p_4^2p_2^2p_0^2+144p_4^2p_2p_1^2p_0-27p_4^2p_1^4\\ 
&+144p_4p_3^2p_2p_0^2-6p_4p_3^2p_1^2p_0-80p_4p_3p_2^2p_1p_0+18p_4p_3p_2p_1^3+16p_4p_2^4p_0 \\
&-4p_4p_2^3p_1^2-27p_3^4p_0^2+18p_3^3p_2p_1p_0-4p_3^3p_1^3-4p_3^2p_2^3p_0+p_3^2p_2^2p_1^2.
\end{align*}
Define
\begin{align}
\text{disc}_1(P)&=8p_4p_2-3p_3^2 \label{def:disc1}, \\
\text{disc}_2(P)&=64p_4^3p_0-16p_4^2p_2^2+16p_4p_3^2p_2-16p_4^2p_3p_1-3p_3^4.\label{def:disc2}
\end{align}
Then
\begin{enumerate}
\item If $\text{disc}(P)<0$ then $P$ has two real roots and two complex conjugate roots,
\item If $\text{disc}(P)>0,\text{disc}_1(P)<0$ and $\text{disc}_2(P)<0$ then $P$ has four real roots,
\item If either $\text{disc}(P)>0$ and $\text{disc}_1(P)>0$ or $\text{disc}(P)>0$ and $\text{disc}_2(P)>0$ then $P$ has two pairs of complex conjugate roots.
\end{enumerate}
\end{theorem}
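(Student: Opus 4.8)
The plan is to reduce to a depressed monic quartic and then read off each root configuration from the signs of the three quantities, exploiting that all three are unchanged in sign under the reduction. First I would divide $P$ by $p_4$ and translate $x\mapsto x-p_3/(4p_4)$. The discriminant is translation invariant and is homogeneous of degree $6$ in the coefficients, so both operations multiply $\text{disc}(P)$ by a positive factor; a direct check (which I carried out for $\text{disc}_1$, finding $8p_4\tilde p_2-3\tilde p_3^2=8p_4p_2-3p_3^2$ after the shift) shows that $\text{disc}_1$ and $\text{disc}_2$ are likewise translation invariant and homogeneous of degrees $2$ and $4$, hence keep their signs. Thus I may assume $P(x)=x^4+px^2+qx+r$, for which a one-line computation gives $\text{disc}_1(P)=8p$ and $\text{disc}_2(P)=16(4r-p^2)$, with $q$ dropping out of both.

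Next I would settle the role of $\text{disc}(P)$ alone. For the monic quartic $\text{disc}(P)=\prod_{i<j}(x_i-x_j)^2$, and since non-real roots occur in conjugate pairs, the sign of this product equals $(-1)^c$, where $c$ is the number of conjugate pairs: $c=0$ (four real roots) and $c=2$ (two conjugate pairs) give $\text{disc}(P)>0$, while $c=1$ (two real, one conjugate pair) gives $\text{disc}(P)<0$. This already proves (1), because among the three configurations only $c=1$ produces a negative discriminant. It also reduces (2) and (3) to distinguishing the two $\text{disc}(P)>0$ configurations.

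The heart of the argument is therefore the case $\text{disc}(P)>0$, and I would handle it through the resolvent cubic $R(w)=w^3+2pw^2+(p^2-4r)w-q^2$, whose three roots $w_1,w_2,w_3$ are the squares of the three pairwise root-sums arising from the three ways of splitting $\{x_1,\dots,x_4\}$ into two pairs (so $R$ governs the factorization $x^4+px^2+qx+r=(x^2+sx+t)(x^2-sx+u)$ with $w=s^2$). Vieta's relations give $w_1+w_2+w_3=-2p$ and $w_1w_2+w_1w_3+w_2w_3=p^2-4r$. When the four roots are real, every pair-sum is real, so each $w_i\geq 0$, and since distinct roots force at least two of the $w_i$ to be strictly positive, I obtain $-2p>0$ and $p^2-4r>0$, that is $\text{disc}_1(P)<0$ and $\text{disc}_2(P)<0$. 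When instead the roots form two conjugate pairs, I would compute directly with roots $\alpha\pm i\beta$ and $-\alpha\pm i\delta$ (the real parts sum to zero), obtaining $p=\beta^2+\delta^2-2\alpha^2$ and $r=(\alpha^2+\beta^2)(\alpha^2+\delta^2)$: if $p\geq 0$ then $\text{disc}_1(P)\geq 0$, while if $p<0$ then $\alpha^2>(\beta^2+\delta^2)/2$ forces $4r-p^2=8\alpha^2(\beta^2+\delta^2)-(\beta^2-\delta^2)^2>0$, so $\text{disc}_2(P)>0$; in either case the hypothesis of (3) holds.

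Finally I would assemble the three forward implications — four real roots $\Rightarrow$ the sign pattern of (2); one conjugate pair $\Rightarrow$ $\text{disc}(P)<0$; two conjugate pairs $\Rightarrow$ the sign pattern of (3) — and observe that their conclusions are mutually exclusive. Since for $\text{disc}(P)\neq 0$ exactly one configuration occurs, the converses, and hence the stated equivalences (1)--(3), follow at once. The main obstacle is the determination of the sign of $p^2-4r$, i.e.\ of $\text{disc}_2(P)$; the identity $p^2-4r=\sum_{i<j}w_iw_j$ coming from the resolvent cubic is what makes the four-real-root case transparent, and the explicit bound $8\alpha^2(\beta^2+\delta^2)-(\beta^2-\delta^2)^2>0$ is the corresponding point in the two-pairs case.
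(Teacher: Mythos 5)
Your proposal is correct, but it takes a genuinely different route from the paper's in the crucial case $\text{disc}(P)>0$. The two arguments share the same opening moves: reduction to the depressed quartic $x^4+px^2+qx+r$, with the observation that $\text{disc}_1$ and $\text{disc}_2$ survive the reduction up to positive factors (the paper records this as $q_2=\text{disc}_1(P)/8p_4^2$ and $q_0-q_2^2/4=\text{disc}_2(P)/64p_4^4$, matching your $8p$ and $16(4r-p^2)$), and the proof of (1) from the sign of $p_4^6\prod_{j<k}(x_j-x_k)^2$. After that the paper follows Rees's \emph{graphical} discussion: real roots of $Q$ are intersections of the curve $\ell_1=\{y=x^4+q_2x^2+q_0\}$ with the line $\ell_2=\{y=-q_1x\}$, and the count of intersections is read off from convexity when $q_2>0$ and from the sign of the minimum value $q_0-q_2^2/4$ when $q_2<0$. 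Your route is instead \emph{algebraic}: the resolvent cubic $w^3+2pw^2+(p^2-4r)w-q^2$, whose roots are the squared pair-sums, turns the four-real-root case into Vieta positivity ($-2p>0$ and $p^2-4r>0$), an explicit computation with roots $\alpha\pm i\beta$, $-\alpha\pm i\delta$ handles the two-pairs case, and mutual exclusivity of the three sign patterns yields the stated implications. What each buys: your version makes every inequality explicit and self-contained, whereas the paper's key geometric claims (that a negative minimum of $\ell_1$ forces four intersections, and that $\ell_1,\ell_2$ meet at most twice when $q_2<0$ but $q_0-q_2^2/4>0$) are asserted with essentially no justification --- indeed the second claim is exactly what your resolvent-cubic computation proves; on the other hand, the paper's argument is shorter and visually transparent.

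One small imprecision to patch: in the two-pairs case with $p=0$ you only obtain $\text{disc}_1=8p=0$, not $>0$, so the claim that ``the hypothesis of (3) holds'' is not literally established there. The fix is one line: $p=0$ together with distinctness of the roots gives $r=(\alpha^2+\beta^2)(\alpha^2+\delta^2)>0$, hence $\text{disc}_2=64r>0$. Alternatively, note that the weaker conclusion ``$\text{disc}_1\geq 0$ or $\text{disc}_2>0$'' already rules out two conjugate pairs under the hypotheses of (2), and that is all your exclusivity argument actually uses, since (3) follows from the four-real-root implication alone.
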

%%%%%%%%%%%%%%%%%%%%%%%%%%%%%%%%%%%%%%%%%%%%%%%%%%
%%%%%%%%%%%%%%%%%%%%%%%%%%%%%%%%%%%%%%%%%%%%%%%%%%
%%%%%%%%%%%%%%%%%%%%%%%%%%%%%%%%%%%%%%%%%%%%%%%%%%

\begin{proof}
If $P$ has four real roots or two pairs of complex conjugate roots then
\[
\text{disc}(P)=p_4^6 \prod_{j<k}(x_j-x_k)^2\geq 0, 
\]
which implies (1). In what follows, therefore, we assume that $\text{disc}(P)>0$. 
%Since $a\neq 0$ and the nature of the roots of polynomials $P(x)$ and $P(x)/a$ are same. 
We make the change of variables $x\mapsto x-p_3/4p_4$ to arrive at that 
\[ 
Q(x)=x^4+q_2x^2+q_1x+q_0,
\]
where
\begin{align*}
q_2&=\frac{8p_4p_2-3p_3^2}{8p_4^2}=\frac{\text{disc}_1(P)}{8p_4^2},\\
q_1&=\frac{8p_4^2p_1-4p_4p_3p_2+p_3^3}{8p_4^3},\\
q_0&=\frac{256p_4^3p_0-64p_4^2p_3p_1+16p_4p_3^2p_2-3p_4^4}{256p_4^4}.\\
\end{align*} 
Note that 
\[ 
q_0-\frac{q_2^2}{4}=\frac{64p_4^3p_0-16p_4^2p_2^2+16p_4p_3^2p_2-16p_4^2p_3p_1-3p_3^4}{64p_4^4}=\frac{\text{disc}_2(P)}{64p_4^4}.\]

Observe that the nature of the roots of $P$ and $Q$ are same. Let
\[
\ell_1=\{y=x^4+q_2x^2+q_0\}\quad\text{and}\quad\ell_2=\{y=-q_1x\}.
\] 
Since $\text{disc}(P)>0$, it follows that $Q$ has either four real roots or two pairs of complex conjugate roots. 
If the minimum value of curve $\ell_1$ is negative then the line $\ell_2$ will intersect $\ell_1$, 
and hence $Q$ will have four real roots. 
If $q_2<0$ then the minimum value of $\ell_1$ is $q_0-q_2^2/4$, and hence $q_2<0$ and $q_0-q_2^2/4<0$ imply (2). 
If $q_2>0$ or $q_0-q_2^2/4>0$ then $\ell_1$ and $\ell_2$ intersect at most twice,
and hence $Q$ has at most two real roots. 
But $Q$ has either four real roots or two pairs of complex conjugate roots. This proves (3).
\end{proof}

%%%%%%%%%%%%%%%%%%%%%%%%%%%%%%%%%%%%%%%%%%%%%%%%%%
%%%%%%%%%%%%%%%%%%%%%%%%%%%%%%%%%%%%%%%%%%%%%%%%%%
%%%%%%%%%%%%%%%%%%%%%%%%%%%%%%%%%%%%%%%%%%%%%%%%%%
\end{appendix}
%%%%%%%%%%%%%%%%%%%%%%%%%%%%%%%%%%%%%%%%%%%%%%%%%%
%%%%%%%%%%%%%%%%%%%%%%%%%%%%%%%%%%%%%%%%%%%%%%%%%%
%%%%%%%%%%%%%%%%%%%%%%%%%%%%%%%%%%%%%%%%%%%%%%%%%%

\bibliographystyle{amsalpha}
\bibliography{stability.bib}

\end{document}